\newtheorem{thm}{Theorem}[section]
\newtheorem{cor}[thm]{Corollary}
\newtheorem{lem}[thm]{Lemma}
\newtheorem{que}[thm]{Question}
\theoremstyle{definition}
\newtheorem{df}[thm]{Definition}
\theoremstyle{remark}
\newtheorem{rem}[thm]{Remark}
\numberwithin{equation}{section}
\newcommand{\N}{\mathbb{N}}
\newcommand{\mf}{\mathfrak}
\renewcommand{\to}{\rightarrow}
\newcommand{\restrict}{\upharpoonright}
\newcommand{\E}{\mathbb{E}}
\newcommand{\R}{\mathbb{R}}
\newcommand{\Z}{\mathbb{Z}}
\renewcommand{\P}{\mathbb{P}}
\newcommand{\eps}{\varepsilon}
\newcommand{\nil}{\varnothing}
\newcommand{\Pb}{\mathbb{P} \left\{ }
\newcommand{\rb}{\right\} }
\newcommand{\Var}{\mathbf{Var}}
\DeclareMathOperator{\osc}{osc}
\begin{document}

\title{Kolmogorov complexity and strong approximation of Brownian motion}

\author{Bj\o rn Kjos-Hanssen\footnote{This material is based upon work supported by the National Science 
Foundation under Grants No.\ 0652669 and 0901020. Thanks are due to the anonymous referee for very helpful comments and to Jacob Woolcutt for assistance with the production of Figure \ref{fig:Fig}.}\\
Tam\'as Szabados}
\maketitle

\begin{abstract}
Brownian motion and scaled and interpolated simple random walk can be jointly embedded in a probability space in such a way that almost surely the $n$-step walk is within a uniform distance $O(n^{-1/2}\log n)$ of the Brownian path for all but finitely many positive integers $n$. Almost surely this $n$-step walk will be incompressible in the sense of Kolmogorov complexity, and all {Martin-L\"of random} paths of Brownian motion have such an incompressible close approximant. This strengthens a result of Asarin, who obtained the bound $O(n^{-1/6} \log n)$. The result cannot be improved to $o(n^{-1/2}{\sqrt{\log n}})$.
\end{abstract}

\section{Introduction}\label{KobeBerkeley}

\subsection{Algorithmic randomness and probability}

Almost sure statements in probability theory usually do not come with examples, but \emph{algorithmically random} objects, defined in terms of Turing computability theory, have most of the properties expected in almost sure behavior. The value of their study lies in the extent to which all almost sure properties of interest are reflected in each algorithmically random object. 

Cara\-th\'eodory's measure algebra isomorphism theorem gives a sense in which it suffices to consider the case of infinite binary sequences with fair-coin measure, equivalently the unit interval $[0,1]$ with Lebesgue measure \cite{KN}. This case has been deeply studied; see for instance the recent book by Nies \cite{NiesBook}. In the present paper we follow up on work of Asarin and Pokrovskii \cite{AP} and Fouch\'e \cite{F} on algorithmic randomness in the context of \emph{Brownian motion}, i.e., Wiener measure on the space of continuous functions $C[0,1]$. Our goal is to strengthen and ``explain'' a theorem of Asarin relating Brownian motion and random walks to Kolmogorov complexity. The isomorphism between the measure algebras of $C[0,1]$ and $[0,1]$ does not help here, because the theorem involves metric structure. 

\subsection{Schnorr and Asarin's theorems}

The algorithmic randomness of an infinite object $A$ may often be expressed in terms of the complexity of its finite approximations. This is most well known in the case of infinite binary strings $A\in 2^\omega=\{0,1\}^\infty$.

\begin{df}\label{1}
 Let $\mu$ be the fair-coin measure on $2^\omega$. A real $A\in 2^\omega$ is \emph{Martin-L\"of random} if for each uniformly $\Sigma^0_1$ sequence $\{U_n\}_{n\in\N}$, $U_{n}\subseteq 2^{\omega}$, with $\mu U_n\le 2^{-n}$, we have $A\not\in\bigcap_n U_n$.
\end{df}

Let $K$ denote prefix-free Kolmogorov complexity \cite{LV} over either the usual alphabet $\{0,1\}$ or the alternative $\{\pm 1\}=\{1,-1\}$. The Kolmogorov complexity of an element of $\{\pm 1\}^{n}$ can be identified with the Kolmogorov complexity of the corresponding string in $\{0,1\}^{n}$ under (say) the map given by $-1\mapsto 0$ and $1\mapsto 1$.

\begin{thm}[Schnorr]\label{Sch}
$A\in 2^\omega$ is Martin-L\"of random if and only if there is a constant $c$ such that for all $n$, $K(A\restrict n)\ge n-c$.
\end{thm}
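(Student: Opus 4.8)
The plan is to connect the measure bound in the definition of a Martin-L\"of test with the Kraft--Chaitin (machine existence) theorem, and to prove each implication by a counting argument.

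For the direction ``Martin-L\"of random $\Implies$ incompressible'', I would argue contrapositively. Assume $A$ fails the complexity bound, so for every $c$ there is some $n$ with $K(A\restrict n)<n-c$. Put $U_c=\bigcup\{[\tau]:K(\tau)<|\tau|-c\}$. Since $K$ is upper semicomputable, $\{\tau:K(\tau)<|\tau|-c\}$ is c.e.\ uniformly in $c$, so $\{U_c\}_{c\in\N}$ is uniformly $\Sigma^0_1$. For the measure, every string $\tau$ counted in $U_c$ satisfies $|\tau|\ge K(\tau)+c+1$, so $\mu U_c\le\sum 2^{-|\tau|}\le 2^{-c-1}\sum_\tau 2^{-K(\tau)}\le 2^{-c-1}\le 2^{-c}$, where $\sum_\tau 2^{-K(\tau)}\le 1$ because the shortest programs for distinct $\tau$ are distinct elements of the prefix-free domain of the universal machine (Kraft's inequality). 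Hence $\{U_c\}$ is a Martin-L\"of test, and by assumption $A\in\bigcap_c U_c$, contradicting randomness.

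For the converse, assume $A$ is not random, witnessed by a Martin-L\"of test $\{U_m\}$ with $A\in\bigcap_m U_m$; passing to a subsequence I may assume $\mu U_m\le 2^{-2m}$, and I write $U_m=\bigcup_{\sigma\in R_m}[\sigma]$ for a c.e.\ prefix-free set $R_m$ with $\sum_{\sigma\in R_m}2^{-|\sigma|}\le 2^{-2m}$. For each $m$ and each $\sigma$ that appears in $R_m$, I issue a Kraft--Chaitin request for a description of $\sigma$ of length $|\sigma|-m$. The total requested weight is $\sum_m\sum_{\sigma\in R_m}2^{-(|\sigma|-m)}=\sum_m 2^m\sum_{\sigma\in R_m}2^{-|\sigma|}\le\sum_m 2^{-m}\le 1$, so the machine exists, giving $K(\sigma)\le|\sigma|-m+O(1)$ for all $\sigma\in R_m$. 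Since $A\in U_m$, some $\sigma\in R_m$ is an initial segment $A\restrict n$ of $A$, whence $K(A\restrict n)\le n-m+O(1)$; taking $m$ arbitrarily large shows that no fixed $c$ can satisfy $K(A\restrict n)\ge n-c$ for all $n$.

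The one point requiring care is the convergence of the Kraft--Chaitin series in the second direction: applied directly to a test with only $\mu U_m\le 2^{-m}$, the weights sum to $\sum_m 1=\infty$, so one must first thin the test so that $\mu U_m$ decays fast enough (decaying like $4^{-m}$ suffices, and this is harmless since one may always pass to a subsequence). With that in hand the argument is pure counting, together with the upper semicomputability of $K$ and Kraft's inequality for the universal prefix-free machine.
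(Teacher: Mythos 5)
The paper states Schnorr's Theorem without proof, attributing it to Schnorr (announced by Chaitin, first published proof by G\'acs), so there is no in-paper argument to compare against. Your proposal is the standard Levin--Schnorr argument and it is correct in both directions: the forward implication by building the test $U_c$ from the c.e.\ (by upper semicomputability of $K$) sets of $c$-compressible strings and bounding $\mu U_c$ via Kraft's inequality, and the converse by issuing Kraft--Chaitin requests along a sufficiently thinned test. The only point worth tightening is the exact bookkeeping in the converse: as written, $\sum_{m} 2^{-m}$ equals $1$ only if you start at $m=1$ (it is $2$ if you include $m=0$), so either index from $1$, thin a bit more aggressively (say $\mu U_m \le 2^{-2m-2}$), or request length $|\sigma|-m+1$; any of these works and does not affect the conclusion, since the final bound $K(\sigma)\le |\sigma|-m+O(1)$ absorbs an additive constant anyway. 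You correctly flag this as the one delicate step, and with that adjustment the argument is complete.
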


In his dissertation \cite{AsarinThesis}, Kolmogorov's student Asarin defined Martin-L\"of random \emph{Brownian motion} analogously to Definition \ref{1} and obtained an analogue of Schnorr's Theorem \ref{Sch} which we now describe.\footnote{Theorem \ref{Sch} was announced by Chaitin \cite{Chaitin}, and attributed to Schnorr (who was the referee of the paper) without proof. The first published proof (in a form generalized to arbitrary computable measures) appeared in the work of G\'acs \cite{Gacs}.}

Let $C[0,1]$ denote the space of continuous functions $f:[0,1]\to\R$ with the uniform metric $d$ given by
\[
	d(f,g)=\max_{t\in [0,1]} |f(x)-g(x)|
\]
and with the Wiener measure $\P$ underlying Brownian motion as in Durrett \cite{Durrett}. Let $\mf S$ be the set of all balls contained in $C$ with rational radii whose centers are piecewise linear functions with rational break points and rational values at the break points. The set $\mf S$ is countable, and we can specify a member of $\mf S$ by specifying a finite list of rational numbers. We may write $\mf S=\{T_n: n\in\N\}$, where the list of rational numbers representing $T_n$ is uniformly computable.
For any total computable function $\psi:\N\times\N\to\N$, the set $U_n=\bigcup_m T_{\psi(n,m)}$ is called a $\Sigma^0_1$ subset of $C[0,1]$. A set $M\subseteq C[0,1]$ is a \emph{Martin-L\"of null set} if there is a total computable function $\psi:\N\times\N\to\N$ such that for $U_n=\bigcup_m T_{\psi(n,m)}$, we have $\P(U_n)\le 2^{-n}$ and $M\subseteq\bigcap_n U_n$.

\begin{thm}[Asarin \cite{AP}]
The union $M_0$ of all Martin-L\"of null sets is a Martin-L\"of null set.
\end{thm}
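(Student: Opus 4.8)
The plan is to transport to $C[0,1]$ the classical construction of a \emph{universal} Martin-L\"of test (Schnorr; see \cite{NiesBook}): although the total computable functions $\psi$ that define Martin-L\"of null sets cannot themselves be effectively enumerated, \emph{all} partial computable functions can be, and from them one builds a single test that dominates every legitimate one after a shift of levels.

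First I fix an effective enumeration $\psi_0,\psi_1,\psi_2,\dots$ of all partial computable functions $\N\times\N\to\N$, noting that every total computable $\psi$ occurs in it. For each $e,n\in\N$ I define a $\Sigma^0_1$ subset $V_{e,n}\subseteq C[0,1]$ by reconstructing the set $\bigcup_m T_{\psi_e(n+e+2,m)}$ one ball at a time, but \emph{trimmed} so as never to exceed Wiener measure $2^{-(n+e+1)}$: dovetailing, I wait for $\psi_e(n+e+2,m)$ to converge and add the ball $T_{\psi_e(n+e+2,m)}$ to $V_{e,n}$ only once it has been verified that the resulting finite union still has measure below $2^{-(n+e+1)}$, discarding it otherwise. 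Put $W_n:=\bigcup_e V_{e,n}$. Unwinding the dovetailing exhibits a total computable $\phi$ with $W_n=\bigcup_m T_{\phi(n,m)}$ (pad $\phi$ with repetitions if only finitely many balls are ever enumerated into $W_n$), so each $W_n$ is a $\Sigma^0_1$ subset of $C[0,1]$ in the sense of the excerpt, and
\[
\P(W_n)\ \le\ \sum_{e}\P(V_{e,n})\ \le\ \sum_{e}2^{-(n+e+1)}\ =\ 2^{-n}.
\]
Hence $\{W_n\}$ witnesses that $\bigcap_n W_n$ is a Martin-L\"of null set. Next I show $M_0\subseteq\bigcap_n W_n$. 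Let $M$ be \emph{any} Martin-L\"of null set, with witnessing total computable $\psi$ and sets $U_n=\bigcup_m T_{\psi(n,m)}$ satisfying $\P(U_n)\le 2^{-n}$ and $M\subseteq\bigcap_n U_n$; choose $e_0$ with $\psi=\psi_{e_0}$. Since $\psi_{e_0}$ is total, every ball $T_{\psi_{e_0}(n+e_0+2,m)}$ is eventually considered in the construction of $V_{e_0,n}$, and since $\P\bigl(\bigcup_m T_{\psi_{e_0}(n+e_0+2,m)}\bigr)=\P(U_{n+e_0+2})\le 2^{-(n+e_0+2)}$, every finite subunion has measure at most $2^{-(n+e_0+2)}$, which is strictly below the cap $2^{-(n+e_0+1)}$; so the trimming never fires and $V_{e_0,n}=U_{n+e_0+2}$. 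Therefore $M\subseteq\bigcap_n U_n\subseteq\bigcap_n U_{n+e_0+2}=\bigcap_n V_{e_0,n}\subseteq\bigcap_n W_n$. As $M$ was arbitrary, $M_0\subseteq\bigcap_n W_n$, and being contained in this Martin-L\"of null set, $M_0$ is itself a Martin-L\"of null set.

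The delicate point, and the only one going beyond the Cantor-space argument, is the effectivity of the verification step: unlike clopen subsets of $2^\omega$, a finite union of basic balls in $C[0,1]$ has Wiener measure that is in general an irrational real, so ``$\P(\text{current union})<2^{-(n+e+1)}$'' is not literally a decidable question. I expect this to be the main obstacle, and I would resolve it by first establishing that the Wiener measure of a basic ball $\{f:\max_t|f(t)-g(t)|<r\}$, with $g$ piecewise linear with rational data and $r\in\Q$, and more generally of any finite Boolean combination of such balls, is a \emph{uniformly computable} real: refining to a common partition makes each center linear on each subinterval, the event becomes ``Brownian motion stays in a (possibly empty) moving band on each subinterval,'' and conditioning at the break points via the Markov property expresses the probability as a finite integral of the classical band-occupation densities (given by rapidly convergent image-method series), which is effectively computable. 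Granting this, one performs the trimming conservatively: approximate the measure of the current finite union from above and below to increasing precision, starting finely enough that the threshold is already positive, add the ball once an upper estimate drops below $2^{-(n+e+1)}$, and discard it once a lower estimate certifies the threshold would be reached. This always terminates, and erring on the side of discarding only shrinks $V_{e,n}$ — harmless for the bound $\P(W_n)\le 2^{-n}$ — while the factor-$2$ slack between the cap $2^{-(n+e+1)}$ and the measure $\le 2^{-(n+e+2)}$ of the level a genuine test would place there ensures, as used above, that a genuine test is reproduced in full. The remaining ingredients (level shifts, padding $\phi$, dovetailing the convergence tests together with the numerical verifications) are routine bookkeeping.
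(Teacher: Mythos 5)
The paper does not actually prove this theorem; it is stated with attribution to Asarin and Pokrovskii \cite{AP} and used as a black box, so there is no in-paper proof to compare your argument against. Your proposal is the standard construction of a universal Martin-L\"of test, transported from the Cantor-space setting to $(C[0,1],\P)$, and it is correct in outline. You also put your finger on the one genuinely non-routine ingredient: the Wiener measure of a finite union of basic balls from $\mf S$ is \emph{not} a rational number the way the measure of a finite union of cylinders in $2^\omega$ is, so the ``trim the enumeration'' step requires the measure of such a finite union to be a uniformly computable real. Your sketch of that fact (refine to a common partition of $[0,1]$, reduce to band-crossing probabilities via the Markov property, evaluate by image-method series) is the standard and correct route.

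One overclaim to repair: the trimming procedure does \emph{not} ``always terminate.'' If a finite union of balls happens to have Wiener measure exactly equal to the dyadic cutoff $2^{-(n+e+1)}$, then neither the upper-estimate condition for adding nor the lower-estimate condition for discarding is ever certified, and your sequential processing stalls on that ball. This does not break the theorem, but the construction should be phrased so that a stall is harmless. The clean fix is to avoid per-ball decisions entirely: for each $k$, let $A_k$ be the union of those $T_{\psi_e(n+e+2,m)}$ with $m\le k$ that have converged, and enumerate the balls of $A_k$ into $V_{e,n}$ as soon as the $\Sigma^0_1$ condition $\P(A_k)<2^{-(n+e+1)}$ is verified. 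This yields a $\Sigma^0_1$ set $V_{e,n}$ with $\P(V_{e,n})\le 2^{-(n+e+1)}$ in all cases; if verification never succeeds past some $k$, then $V_{e,n}$ is just a smaller $\Sigma^0_1$ set, which is harmless for the bound on $\P(W_n)$. For a genuine total test indexed by $e_0$ your factor-of-two slack gives $\P(A_k)\le 2^{-(n+e_0+2)}<2^{-(n+e_0+1)}$ for every $k$, so verification succeeds for all $k$ and $V_{e_0,n}=U_{n+e_0+2}$ exactly, which is what the containment $M\subseteq\bigcap_n W_n$ needs. With that adjustment your proof is complete.
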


A function $x\in C[0,1]$ is called \emph{Martin-L\"of random} if $x\notin M_0$. Next, we define the analogue of the other side of Schnorr's Theorem. A sequence of binary strings $\alpha_n\in\{\pm 1\}^n$, $n\in\N$, is said to be \emph{complex} if there is a constant $c>0$ such that for all $n\in\N$, 
\[
	K(\alpha_n)\ge n-c.
\]
Let $C_n$ be the set of all functions $f\in C[0,1]$ that are linear with slope $\pm\sqrt{n}$ in each interval $[\frac{i-1}{n},\frac{i}{n}]$, $1\le i\le n$, and such that $f(0)=0$. Each function $f\in C_n$ is associated with a finite string, where $-1$ ($1$) represents an interval where $f$ is decreasing (increasing). If this string is $x$, then we write $f=\ell_x$. Thus we can speak of a sequence of functions $f_n\in C_n$, $n\in\N$, being complex as well. We are now in a position to state Asarin's result.

\begin{thm}[Asarin \cite{AP}]\label{Asar}
$W\in C[0,1]$ is a Martin-L\"of random Brownian motion if and only if there is a constant $c$ such that for all but finitely many $n\in\N$ there is a string $x=(x_1,\ldots,x_n)\in\{\pm 1\}^n$ such that 
\[
	d(\ell_x,W)\le n^{-1/10}\qquad\text{and}\qquad K(x_1,\ldots,x_n)\ge n-c.
\]
\end{thm}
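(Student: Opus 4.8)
The plan is to couple $W$ with a scaled interpolated simple random walk via a \emph{strong approximation} and then to translate between algorithmic randomness of the path and incompressibility of the walk; only the converse direction needs genuinely new combinatorial input. For the coupling I would use an effective version of the Koml\'os--Major--Tusn\'ady and Szabados strong invariance principles: a computable, measure-preserving bijection modulo null sets $\Phi\colon 2^\omega\to C[0,1]$ carrying the fair-coin measure to Wiener measure $\P$, with computable measure-preserving inverse, such that for $\P$-almost every $\omega$, writing $W=\Phi(\omega)$ and letting $x^{(n)}(\omega)\in\{\pm1\}^n$ record the signs of the first $n$ increments used by the construction, one has $d(\ell_{x^{(n)}},W)\le n^{-1/6}\log n$ for all large $n$, and $x^{(n)}(\omega)$ and the prefix $\omega\restrict n$ are uniformly computable from one another. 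Asarin's coupling already realizes such a rate; the sharper rate $O(n^{-1/2}\log n)$ proved later in the paper would only strengthen the forward implication and is unneeded here, since $n^{-1/6}\log n=o(n^{-1/10})$.

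\textbf{Forward implication.} If $W$ is Martin-L\"of random, then, $\Phi^{-1}$ being computable and measure-preserving, $\omega:=\Phi^{-1}(W)$ is Martin-L\"of random in $2^\omega$ by preservation of randomness. By Schnorr's Theorem~\ref{Sch} there is $c_0$ with $K(\omega\restrict n)\ge n-c_0$ for all $n$, and interconvertibility of $x^{(n)}$ with $\omega\restrict n$ upgrades this to $K(x^{(n)})\ge n-c$ with $c=c_0+O(1)$. Since $d(\ell_{x^{(n)}},W)\le n^{-1/6}\log n\le n^{-1/10}$ for all large $n$, the string $x^{(n)}$ witnesses the conclusion for all but finitely many $n$.

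\textbf{Converse implication.} I would prove the contrapositive. Let $W$ be non-random, fix $c$, and let $\{U_N\}$ be a Martin-L\"of test with $W\in\bigcap_N U_N$, $\P(U_N)\le2^{-N}$, each $U_N$ a union of basic balls. Set $D_{N,n}=\{x\in\{\pm1\}^n:\ell_x\text{ lies within }n^{-1/10}\text{ of }U_N\}$, a family that is r.e.\ uniformly in $(N,n)$ and, since $W\in U_N$, contains every $x$ with $d(\ell_x,W)\le n^{-1/10}$. The crux is the bound
\[
	\#D_{N,n}\le 2^{\,n-N+O(1)}\qquad(n\ge n_0(N)),
\]
for a threshold $n_0(N)$ of at most exponential growth. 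To get it I would: (i) prove the counting lemma $\#\{x\in\{\pm1\}^n:d(\ell_x,g)\le\rho\}\le 2^n\exp(-\kappa/\rho^2)$ for all $g\in C[0,1]$ and $0<\rho\le\rho_0$, via the reflection principle for a $\pm1$ walk confined to a tube of width $2\rho\sqrt n$ together with Anderson's inequality (which reduces an arbitrary center to a straight tube); and (ii) invoke the strong approximation in the form that $\ell_X$ for $X$ uniform on $\{\pm1\}^n$ is realized within $O(n^{-1/4}\log n)$ of a Brownian path off an event of probability $\le n^{-10}$, so that $\#D_{N,n}=2^n\,\P\bigl(\ell_X\in(U_N)^{+n^{-1/10}}\bigr)\le 2^n\P\bigl((U_N)^{+\delta_n}\bigr)+2^{n}n^{-10}$ with $\delta_n\to0$, the $\P$-mass of the $\delta_n$-fattening of $U_N$ being driven down to $2^{-N+O(1)}$ once $\delta_n$ is small (the small balls of the test contribute few walks by (i), the large ones little measure). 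With $\#D_{N,n}$ so bounded, choose $N(n):=\lceil\alpha\log n\rceil$ for a constant $\alpha>1$ small enough that $n\ge n_0(N(n))$ eventually; then any $x$ with $d(\ell_x,W)\le n^{-1/10}$ lies in $D_{N(n),n}$, a member of a uniformly r.e.\ family of sets of size $\le 2^{\,n-N(n)+O(1)}$, whence
\[
	K(x)\le\bigl(n-N(n)+O(1)\bigr)+K(n)+O(1)\le n-(\alpha-1)\log n+O(\log\log n)<n-c
\]
for all large $n$. Hence for every $c$ there are infinitely many $n$ admitting no $n^{-1/10}$-approximant of $W$ of complexity $\ge n-c$, which is the contrapositive.

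\textbf{Main obstacle.} The forward implication is essentially bookkeeping once one has an \emph{effective} strong approximation and cites preservation of randomness. The real work is the estimate $\#D_{N,n}\le 2^{\,n-N+O(1)}$ in the converse: proving the counting lemma with the sharp $\exp(-\kappa/\rho^2)$ dependence, and—more delicately—controlling the $\delta_n$-fattening of the test uniformly over its infinitely many basic balls, so that the Kolmogorov-complexity estimate still closes after the logarithmic overhead $K(n)$ has been absorbed by letting the test level $N(n)$ grow like $\log n$. It is precisely the interplay of the approximation scale $n^{-1/10}$, the small-ball exponent, and that logarithmic overhead that pins down the admissible scale and, in the sharper analysis, lowers it to $O(n^{-1/2}\log n)$.
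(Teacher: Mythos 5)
The paper does not actually prove Theorem~\ref{Asar}: it is cited from Asarin's work and then used in the proof of Theorem~\ref{AsarinImprov}, where the authors write that ``in light of Asarin's Theorem, it suffices to show the \emph{only if} direction.'' So the only direction for which the paper exhibits an argument of its own (with a better rate) is the forward one, and that argument is genuinely different from yours: rather than transferring randomness through an effective measure isomorphism, the paper builds a $\Sigma^0_1$ ``bad'' class $U_b$ of paths all of whose close walks are compressible, and uses the coupling only to bound $\P(U_b)$ via Corollary~\ref{gaussian} and Lemma~\ref{Kobe}. No computable bijection or preservation-of-randomness argument appears anywhere.

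Your forward direction has a real gap. You assume a computable, measure-preserving bijection $\Phi\colon 2^\omega\to C[0,1]$ (with computable inverse) such that the prefix $\omega\restrict n$ and a walk $x^{(n)}$ satisfying $d(\ell_{x^{(n)}},W)\le n^{-1/6}\log n$ are \emph{uniformly computable from one another for every} $n$. That consistency is not provided by the Koml\'os--Major--Tusn\'ady construction or by Theorem~\ref{Szab}: both produce a \emph{different} independent sequence $X^{(n)}=(X_{1,n},\dots,X_{n,n})$ for each $n$, because the embedding uses a rescaled copy $W^{(n)}(t)=\sqrt n\,W(t/n)$, and the $X^{(n)}$ for distinct $n$ are not prefixes of a single sequence. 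If you instead build a single consistent sequence (e.g.\ via Skorokhod embedding) you lose the stated rate and, more seriously, the inverse map from the sign sequence back to $W$ is not computable, so randomness preservation does not apply. You would either need to cite a result that genuinely constructs such a $\Phi$ (Fouch\'e's representation theorems are in this spirit but do not deliver precisely what you use) or, better, abandon the bijection and run the test-based argument of the paper, which avoids all of this.

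Your converse also has an unclosed gap, which you yourself flag. The bound $\#D_{N,n}\le 2^{n-N+O(1)}$ is made to rest on ``driving the $\P$-mass of the $\delta_n$-fattening of $U_N$ down to $2^{-N+O(1)}$,'' but a $\Sigma^0_1$ set $U_N$ with $\P(U_N)\le 2^{-N}$ can be a union of tiny balls around a dense set, in which case $(U_N)^{+\delta}$ has measure close to $1$ for \emph{every} $\delta>0$; there is no uniform modulus relating $\P((U_N)^{+\delta})$ to $\P(U_N)$ for arbitrary Martin-L\"of tests. The split into ``small balls contribute few walks, large ones little measure'' needs an actual argument about how the two regimes fit together, with explicit thresholds that are computable from $N$, and as written there is none; this is the crux of the converse and is left as a wish rather than a proof.
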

\begin{figure}[htb!]
\centering
\includegraphics[height=7.5cm]{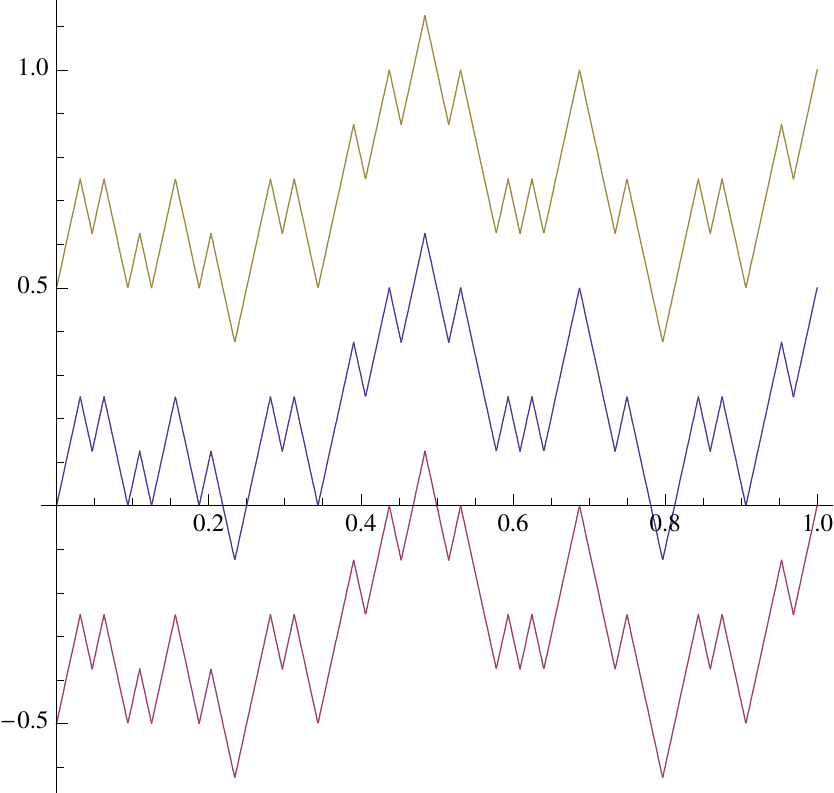}
\caption{The ball of radius $n^{-1/6}$ around $\ell_x$ for a randomly chosen $x\in \{\pm 1\}^{n}$, where $n=4^3$.}
\label{fig:Fig}
\end{figure}
The bound $n^{-1/10}$ was also quoted and used by Fouch\'e \cites{F, F1}. In his dissertation \cite{AsarinThesis}, Asarin actually improved the bound to $20 \,n^{-1/6} \log n$; an instance of the bound $n^{-1/6}$ is illustrated in Figure \ref{fig:Fig}. In the present paper we will further improve and explain these bounds. In order to do so, we will use \emph{strong approximation} of Brownian motion, i.e. the joint realization of Brownian motion and random walks on the same probability space in such a way that the Brownian path and a scaled and interpolated random walk with $n$ steps in the unit interval are almost surely within a $d$-distance $O(n^{-1/2}\log n)$. The proof of this fact is based on a finite horizon version of a celebrated embedding theorem by Koml\'os, Major, and Tusn\'ady \cites{KMT75, KMT76}, Theorem \ref{Szab}. 

\section{Joint embedding of random variables}

We start with $\{X_i\}_{i\in\N}$, a sequence of independent random variables satisfying 
\[
	\mathbb P\{X_i=1\}=\mathbb P\{X_i=-1\}=\frac{1}{2}.
\]
For any $n\ge 0$, let $S_n=\sum_{i=1}^n X_i$; the sequence of random variables $\{S_n\}_{n\in\N}$ is a \emph{random walk on $\Z$}. The \emph{piecewise linearization} $\ell_X(t)$ of $X=(X_1,\ldots,X_n)$ is a piecewise linear function with breakpoints $k/n$, such that
\[
\ell_X \left(\frac{k}{n}\right) = \frac{S(k)}{\sqrt{n}} \qquad (1 \le k \le n).
\]
For a particular value $x\in \{\pm 1\}^n$ of $X$, this agrees with our definition of $\ell_x$ in Section \ref{KobeBerkeley}. Standard \emph{Brownian motion} is a random function $W\in C[0,1]$. A good introductory reference is Durrett \cite{Durrett}. There are several results to the effect that $\ell_X$ is an approximation of $W$, similar to how a finite binary sequence $A\restrict n=(A(0),\ldots,A(n-1))$ is an approximation of $A=(A(0),A(1),\ldots)$.

Suppose that one wants to define an i.i.d. sequence $X_1, X_2, \dots$ of random variables with a given distribution so that the partial sums are as close to Brownian motion as possible.  The result we need from Koml\'os, Major, and Tusn\'ady \cites{KMT75, KMT76} is as follows.

\begin{thm}\label{labeledKMT}
Assume that $\E (X_k)=0$, $\Var (X_k)=1$ and the moment generating function $\E \left( e^{u X_k} \right) < \infty$ for $|u| \le u_0, \ u_0 > 0$. Let $S(k) = X_1+ \cdots +X_k$, $k \ge 1$, be the partial sums. If Brownian motion $\left(W(t)\right)_{t \ge 0}$ is given, then for any $n \ge 1$ there exists a sequence of transformations applied to $W(1), W(2), \dots $ so that one obtains the desired partial sums $S(1), S(2), \dots $ and the difference between the two sequences is the smallest possible:
\begin{equation}
\Pb \max_{0 \le k \le n} |S(k) - W(k)| > C_0 \log n + x \rb < K_0 e^{-\lambda x}, \label{eq:KMT0}
\end{equation}
for any $n \ge 1$ and $x>0$, where $C_0, K_0, \lambda$ are positive constants that may depend on the distribution of $X_k$, but not on $n$ or $x$. Moreover, $\lambda$ can be made arbitrarily large by choosing a large enough $C_0$.
\end{thm}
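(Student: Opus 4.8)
The plan is to carry out the dyadic ``Hungarian'' construction of Koml\'os, Major and Tusn\'ady: given the Brownian path, build the partial sums $S(1),S(2),\dots$ as an explicit measurable function of the values $W(1),W(2),\dots$, assembled scale by scale so that the two random walks are \emph{forced} to stay close except on an event of exponentially small probability. Two harmless reductions come first: since \eqref{eq:KMT0} constrains only integer times, it suffices to couple the walk $\{S(k)\}$ with the Gaussian random walk $\{W(k)\}$ whose increments $W(k)-W(k-1)$ are i.i.d.\ $N(0,1)$; and by appending extra independent copies of $X$ we may assume $n=2^{N}$.

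The recursion is the heart of the construction. Condition on the two endpoints of a dyadic block of length $2^{k+1}$: for Brownian motion the midpoint value is then conditionally Gaussian with explicit mean and variance of order $2^{k}$, while for the walk, once the endpoint partial sums have been fixed, the midpoint increment is conditionally a sum of $2^{k+1}$ independent copies of $X$ conditioned on the block total --- in the symmetric Bernoulli case a shifted, rescaled hypergeometric variable, the discrete counterpart of a Brownian bridge at its midpoint. Working from the top scale downwards, at every node I couple this discrete variable to the standardized Gaussian midpoint increment $Z$ already produced by $W$ via the monotone quantile transform, setting the $S$-increment equal to $G^{-1}(\Phi(Z))$ with $G$ the target discrete law. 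Consistency of the nested dyadic conditionings gives $\{S(k)\}$ the required i.i.d.\ law while realizing it as a function of $W$, exactly as the statement asks.

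The quantitative input is the quantile-coupling inequality: at a node whose conditional standard deviation is $\sigma$, the discrete midpoint variable $V$ coupled to $Z\sim N(0,1)$ satisfies $|V-\sigma Z|\le C(1+Z^{2}/\sigma)$. For Bernoulli increments this is Tusn\'ady's lemma; for a general $X$ with $\E(X)=0$, $\Var(X)=1$ and $\E(e^{uX})<\infty$ for $|u|\le u_{0}$ it is the KMT refinement, proved via an Edgeworth expansion of the law of a normalized partial sum together with a uniform bound on the remainder --- and this is precisely where the moment generating function hypothesis enters, making that expansion and its error estimate valid over the whole relevant range of $Z$. Telescoping the node errors along the length-$O(\log n)$ path joining a leaf to the root gives
\[
	|S(k)-W(k)|\ \le\ C\sum_{\text{scales }\ell}\Bigl(1+\frac{Z_{\ell}^{2}}{2^{\ell/2}}\Bigr),
\]
where the $Z_{\ell}$ are conditionally independent standard normals, one per scale. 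The ``$1$''s sum to $O(\log n)$, and the quadratic part is a geometrically weighted sum of independent $\chi^{2}_{1}$ variables, hence sub-exponential, so a Chernoff estimate together with a union bound over the $n$ integer times yields $\P\{\max_{0\le k\le n}|S(k)-W(k)|>C_{0}\log n+x\}<K_{0}e^{-\lambda x}$. Finally, inflating $C_{0}$ enlarges the deterministic allowance at every scale, so exceeding the threshold forces some bridge variable $Z_{\ell}$ to be large at a level proportional to $C_{0}$; the Gaussian tail $\P\{|Z_{\ell}|>t\}\le e^{-t^{2}/2}$ then produces an exponential bound whose rate $\lambda$ grows with $C_{0}$, which is the last clause of the theorem.

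The main obstacle is the general quantile-coupling inequality. Tusn\'ady's lemma in the binomial/hypergeometric case is elementary in spirit but genuinely delicate in the constants, and upgrading the quadratic-in-$Z$, $\sigma^{-1}$-gain estimate to arbitrary distributions with exponential moments --- uniformly over all scales and all relevant $Z$ --- is the technical core of the Koml\'os--Major--Tusn\'ady theorem; the complete argument is in \cites{KMT75, KMT76}. Once that lemma is available, the rest is bookkeeping: organizing the dyadic sum so that the deterministic term stays at the optimal order $\log n$ rather than $(\log n)^{2}$, and tracking how large $C_{0}$ must be chosen to realize a prescribed $\lambda$.
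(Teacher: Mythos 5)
The paper does not prove Theorem~\ref{labeledKMT}; it is stated explicitly as an imported result (``the result we need from Koml\'os, Major, and Tusn\'ady \dots\ is as follows''), so there is no in-paper proof to compare against. Your sketch is a fair summary of the actual KMT dyadic (``Hungarian'') construction as found in the cited references: reduction to $n=2^N$, conditioning on block endpoints, quantile (Skorokhod-free) coupling of the midpoint increment to the Brownian bridge value, and telescoping the node errors along the $O(\log n)$ path from root to leaf, with Tusn\'ady's lemma as the Bernoulli special case and the general quantile-coupling bound (via Edgeworth/Cram\'er expansion, which is where the exponential-moment hypothesis is used) as the technical core. That is essentially how the argument runs.

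One technical imprecision worth flagging: the quantile-coupling inequality at a node of conditional standard deviation $\sigma$ is of the form $|V-\sigma Z|\le C(1+Z^{2})$, \emph{without} the $\sigma^{-1}$ factor you wrote in the quadratic term. Consequently the error sum along a leaf-to-root path is not a geometrically weighted sum of $\chi^{2}_{1}$ variables but an essentially unweighted one over $\log_{2}n$ scales; it is still sub-exponential with mean $O(\log n)$, which is exactly what produces the $C_{0}\log n$ deterministic term plus the $e^{-\lambda x}$ tail after a union bound over the $n$ integer times, and the link between $C_{0}$ and $\lambda$ comes from absorbing that mean. The conclusion you draw is unchanged, but the unweighted form is what the cited argument actually gives (and needs).
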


Taking $x=C_0 \log n$ in Theorem \ref{labeledKMT} one obtains
\begin{equation}
\Pb \max_{0 \le k \le n} |S(k) - W(k)| > 2 C_0 \log n \rb < K_0 n^{-\lambda C_0}, \label{eq:KMT}
\end{equation}
where $n \ge 1$ is arbitrary and one may assume that $\alpha := \lambda C_0 \ge 2$.

\begin{lem}[Brownian scaling relation, \cite{Durrett}*{p. 372}] \label{scale}
Let $\alpha\ge 0$. For standard Brownian motion (with $W_0=0$), the stochastic process
\[
	\{W_{s\alpha}\}_{s\ge 0}
\]
has the same distribution as
\[
	\{\sqrt{\alpha} \cdot W_s\}_{s\ge 0}.
\]
\end{lem}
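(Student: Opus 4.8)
The plan is to reduce everything to the defining properties of Brownian motion. First I would dispose of the trivial case $\alpha=0$, in which both processes are identically the zero function and there is nothing to check. So assume $\alpha>0$ and put $Z_s:=\alpha^{-1/2}W_{\alpha s}$ for $s\ge 0$. It suffices to prove that $\{Z_s\}_{s\ge 0}$ is itself a standard Brownian motion, because then $\{W_{\alpha s}\}_{s\ge 0}=\{\sqrt{\alpha}\,Z_s\}_{s\ge 0}$ has the same law as $\{\sqrt{\alpha}\,W_s\}_{s\ge 0}$.

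Next I would verify the four defining properties of standard Brownian motion for $Z$, one at a time. (i) $Z_0=\alpha^{-1/2}W_0=0$. (ii) The sample paths $s\mapsto Z_s$ are continuous, being the composition of the continuous map $s\mapsto\alpha s$ with the continuous path of $W$, followed by multiplication by the constant $\alpha^{-1/2}$. (iii) For $0\le s<t$, the increment $Z_t-Z_s=\alpha^{-1/2}(W_{\alpha t}-W_{\alpha s})$ is Gaussian with mean $0$ and variance $\alpha^{-1}\,\Var(W_{\alpha t}-W_{\alpha s})=\alpha^{-1}(\alpha t-\alpha s)=t-s$. (iv) Given disjoint intervals $[s_1,t_1],\dots,[s_k,t_k]\subseteq[0,\infty)$, the images $[\alpha s_1,\alpha t_1],\dots,[\alpha s_k,\alpha t_k]$ are again pairwise disjoint, so the increments $W_{\alpha t_j}-W_{\alpha s_j}$ are independent, and hence so are the increments $Z_{t_j}-Z_{s_j}$.

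An equivalent route, which I would mention as an alternative, is to note that both $\{W_{\alpha s}\}_{s\ge 0}$ and $\{\sqrt{\alpha}\,W_s\}_{s\ge 0}$ are mean-zero Gaussian processes with continuous paths, and to compute their covariance functions directly: $\E[W_{\alpha s}W_{\alpha t}]=\min(\alpha s,\alpha t)=\alpha\min(s,t)=\E[\sqrt{\alpha}\,W_s\cdot\sqrt{\alpha}\,W_t]$. Since a Gaussian process is determined by its mean and covariance functions, the finite-dimensional distributions of the two processes coincide, and since both have continuous sample paths this determines their laws on $C[0,\infty)$. There is no genuine obstacle in either argument; the only point deserving a word of care is the passage from equality of finite-dimensional distributions to equality of laws on path space, which is the standard fact that the law of a process with continuous sample paths is pinned down by its finite-dimensional marginals. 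The first approach avoids even this by invoking a characterization theorem for Brownian motion directly.
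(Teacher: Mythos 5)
Your proof is correct. The paper states this lemma without proof, citing it directly to Durrett (p.~372) as a standard fact, so there is no in-paper argument to compare against. Both of your routes are the textbook ones: the first verifies the four defining properties (zero start, continuous paths, stationary Gaussian increments of the right variance, independent increments over disjoint intervals) for $Z_s = \alpha^{-1/2} W_{\alpha s}$ after handling $\alpha=0$ separately, and the second matches mean and covariance functions of two continuous mean-zero Gaussian processes and invokes the standard fact that finite-dimensional distributions plus path continuity determine the law on path space. Both are complete and correct; the small point of care you flag (passing from finite-dimensional distributions to the law on $C[0,\infty)$) is exactly the right thing to acknowledge, and your first approach sidesteps it cleanly by appealing to a characterization theorem. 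Nothing is missing.
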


\begin{thm}\label{Szab}
There exists a joint distribution of Brownian motion $\left(W(t)\right)_{t \ge 0}$, and for each $n \ge 1$, random variables
$X_{k,n}$ for $1 \le k \le n$, such that for each $n \ge
1$,
\begin{enumerate}
\item[1.] the random variables $X_{1,n}, \dots , X_{n,n}$ are mutually independent,

\item[2.] $\Pb X_{k,n} = 1 \rb = \Pb X_{k,n} = -1 \rb = \frac12$ ,

\item[3.] there are constants $c_1$, $c_2$, and $\alpha \ge 2$, such that if $X^{(n)} = (X_{1,n}, \dots ,
X_{n,n})$,
\[
\Pb \sup_{0\le t \le 1} |W(t) - \ell_{X^{(n)}} (t)| \ge c_1 \frac{\log n}{\sqrt{n}} \rb \le
\frac{c_2}{n^{\alpha}}.
\]
\end{enumerate}
\end{thm}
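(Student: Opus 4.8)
The plan is to derive the theorem from the Koml\'os--Major--Tusn\'ady estimate \eqref{eq:KMT} by running that coupling, separately for each $n$, on a time- and space-rescaled copy of the given Brownian motion, and then to promote the resulting control at the mesh points $k/n$ to uniform control on $[0,1]$ via a quantitative modulus-of-continuity bound for $W$.

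First I would fix a single standard Brownian motion $W$ on $[0,\infty)$ on some probability space; this $W$ will serve for the theorem, and each family $X^{(n)}$ will be obtained as a measurable function of $W$, so that all the random variables in sight live on one space. Fix $n \ge 1$. By the Brownian scaling relation (Lemma \ref{scale}, with $\alpha = 1/n$), the process $\widetilde W_n(s) := \sqrt n\, W(s/n)$, $s \ge 0$, is again a standard Brownian motion. The fair $\pm 1$ law has mean $0$, variance $1$, and moment generating function $\E(e^{uX}) = \cosh u < \infty$ for every $u$, so Theorem \ref{labeledKMT} applies to $\widetilde W_n$ and yields partial sums $S^{(n)}(k)$, $k \ge 0$, with $S^{(n)}(0) = 0$, whose increments $X_{k,n} := S^{(n)}(k) - S^{(n)}(k-1)$ are mutually independent fair $\pm 1$ variables; retaining $X_{1,n}, \dots, X_{n,n}$ establishes items 1 and 2. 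Moreover, by \eqref{eq:KMT} there are constants $C_0, K_0$ and $\alpha = \lambda C_0$, where $\alpha \ge 2$ can be arranged by the last sentence of Theorem \ref{labeledKMT}, with
\[
\Pb \max_{0 \le k \le n} |S^{(n)}(k) - \widetilde W_n(k)| > 2 C_0 \log n \rb < K_0\, n^{-\alpha}.
\]

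Since $\widetilde W_n(k) = \sqrt n\, W(k/n)$ and $S^{(n)}(k)/\sqrt n = \ell_{X^{(n)}}(k/n)$ by definition of the piecewise linearization, dividing the event above through by $\sqrt n$ rewrites it as control of $\ell_{X^{(n)}}$ against $W$ at the breakpoints:
\[
\Pb \max_{0 \le k \le n} |\ell_{X^{(n)}}(k/n) - W(k/n)| > 2 C_0\, \frac{\log n}{\sqrt n} \rb < K_0\, n^{-\alpha}.
\]
To reach all $t \in [0,1]$, take $t$ in the $k$-th mesh interval $[(k-1)/n, k/n]$ and split
\[
|W(t) - \ell_{X^{(n)}}(t)| \le |W(t) - W(k/n)| + |W(k/n) - \ell_{X^{(n)}}(k/n)| + |\ell_{X^{(n)}}(k/n) - \ell_{X^{(n)}}(t)|.
\]
The middle summand is governed by the previous display; the last is at most $1/\sqrt n$, as $\ell_{X^{(n)}}$ has slope $\pm\sqrt n$ on an interval of length $1/n$. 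For the first summand I would use the Markov property to reduce to a Brownian motion on $[0,1/n]$, then the reflection principle and the Gaussian tail bound to get $\Pb \sup_{0 \le s \le 1/n} |W(s)| > a \rb \le 4 e^{-a^2 n / 2}$, and finally, with $a = c\,\frac{\log n}{\sqrt n}$, a union bound over the $n$ mesh intervals:
\[
\Pb \max_{1 \le k \le n} \ \sup_{(k-1)/n \le s \le k/n} |W(s) - W((k-1)/n)| > c\,\frac{\log n}{\sqrt n} \rb \le 4n\, e^{-c^2 (\log n)^2 / 2}.
\]
Because the right-hand side decays faster than every power of $n$, for $c$ large enough it is at most $n^{-\alpha}$ once $n$ exceeds some threshold, and the finitely many smaller $n$ are absorbed by taking $c_2$ large (making $c_2 n^{-\alpha} \ge 1$, so the bound is vacuous). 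Summing the probabilities of the two bad events gives a bound of the form $c_2\, n^{-\alpha}$, and off this event every $t \in [0,1]$ obeys $|W(t) - \ell_{X^{(n)}}(t)| \le (2C_0 + c)\frac{\log n}{\sqrt n} + \frac{1}{\sqrt n} \le c_1\, \frac{\log n}{\sqrt n}$ for a suitable $c_1$, which is item 3.

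The step I expect to be the main obstacle is this last interpolation: \eqref{eq:KMT} pins down the embedded walk only at the integer times, i.e.\ (after rescaling) on the grid $\{k/n : 0 \le k \le n\}$, so one must control the Brownian oscillation on each subinterval of length $1/n$ by a tail that still sits below $c_2 n^{-\alpha}$ --- in essence a quantitative form of L\'evy's modulus of continuity. The elementary reflection-principle estimate above does the job precisely because $(\log n)^2$ eventually dominates $\alpha \log n$. Everything else --- checking that the fair $\pm 1$ law meets the hypotheses of Theorem \ref{labeledKMT}, that the Brownian rescaling preserves the law of $W$, and that the exponent $\alpha \ge 2$ carries through --- is routine bookkeeping.
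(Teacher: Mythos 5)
Your proof follows the same overall strategy as the paper's: rescale $W$ by Brownian self-similarity to get $W^{(n)}(t)=\sqrt n\, W(t/n)$, run the finite-horizon KMT coupling on $W^{(n)}$ to obtain the signs $X^{(n)}$, invoke \eqref{eq:KMT} to pin the walk to the Brownian path at the mesh points $k/n$, and then interpolate by controlling the oscillation of $W$ on each subinterval of length $1/n$ together with the trivial $1/\sqrt n$ bound on the oscillation of the broken line. The one place you genuinely diverge is the oscillation estimate. The paper cites Cs\"org\H o--R\'ev\'esz Lemma 1.1.1, obtaining a tail bound at threshold $c\sqrt{\log n / n}$ of order $n^{1-c^2/(2+\eps)}$; you instead derive an estimate directly from the reflection principle plus the Gaussian tail, with a union bound over the $n$ subintervals, at the coarser threshold $c\,\log n/\sqrt n$ giving a superpolynomial tail $4n\,e^{-c^2(\log n)^2/2}$. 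This is a self-contained, elementary substitute that costs you a factor $\sqrt{\log n}$ in that one term, which is harmless since the KMT term is already of size $\log n/\sqrt n$. (You could have recovered the sharper threshold from the same reflection-principle bound by taking $a=c\sqrt{\log n/n}$, which yields $4n^{1-c^2/2}$, but it is not needed.) Everything else --- the verification that the fair $\pm 1$ law meets the moment-generating-function hypothesis, the identification $S^{(n)}(k)/\sqrt n=\ell_{X^{(n)}}(k/n)$, the arrangement $\alpha\ge 2$, and the absorption of small $n$ into the constant $c_2$ --- matches the paper's proof, and the argument is correct.
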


\begin{proof}
By the self-similarity of Brownian motion (Lemma \ref{scale}), defining 
\[
	W^{(n)}(t) = \sqrt{n}\, W\left(\frac{t}{n}\right)
\]
for each $n \ge 1$, $W^{(n)}$ is standard Brownian motion as well. Use $W^{(n)}$ in the KMT construction described above to obtain $X^{(n)}$. Then by (\ref{eq:KMT}),
\begin{equation}
\Pb \max_{0 \le k \le n} \left|W\left(\frac{k}{n}\right) -
\ell_{X^{(n)}}\left(\frac{k}{n}\right)\right|
> 2 C_0 \frac{\log n}{\sqrt{n}} \rb < K_0 n^{-\alpha} \label{eq:KMT1}.
\end{equation}
By Cs{\"o}rg{\H{o}} and R{\'e}v{\'e}sz \cite{CRbook}*{Lemma 1.1.1} we have that for any $\epsilon > 0$ there exists $C>0$ such that for any
$n \ge 1$ and $c\ge((\alpha+1)(2+\eps))^{1/2}$, one has
\begin{equation} \label{eq:CSR}
\Pb \sup_{0 \le k < n, 0 \le t - \frac{k}{n} \le \frac{1}{n}} \left|W(t) - W\left(\frac{k}{n}\right)
\right| \ge c \left(\frac{\log n}{n} \right)^{\frac12} \rb \le C n^{1-\frac{c^{2}}{2+\epsilon }} \le C
n^{-\alpha }.
\end{equation}
Finally, by its definition,
\begin{equation} \label{eq:lXn}
\sup_{0 \le k < n, 0 \le t - \frac{k}{n} \le \frac{1}{n}} \left|\ell_{X^{(n)}}\left(t\right) -
\ell_{X^{(n)}}\left(\frac{k}{n}\right) \right| \le \frac{1}{\sqrt{n}}.
\end{equation}

(\ref{eq:KMT1}), (\ref{eq:CSR}), and (\ref{eq:lXn}) together prove the theorem.
\end{proof}

\begin{cor}\label{gaussian}
We can furthermore assert that for each $N\ge 1$,
\[
	\P\left(\bigcup_{n>N}\left\{ d(W,\ell_{X^{(n)}}) \ge  c\frac{\log n}{\sqrt{n}}\right\}\right)\le  \frac{c_2}{N}.
\]
\end{cor}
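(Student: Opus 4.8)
The plan is to derive this directly from Theorem \ref{Szab} by a union bound together with a convergent-series tail estimate of Borel--Cantelli type. First I would fix $c = c_1$, the constant appearing in Theorem \ref{Szab}; since enlarging $c$ only shrinks each event $\{d(W,\ell_{X^{(n)}}) \ge c\,\log n/\sqrt n\}$, the statement for $c = c_1$ already implies it for every larger $c$. Then, for each individual $n$, Theorem \ref{Szab} gives
\[
	\P\left(d(W,\ell_{X^{(n)}}) \ge c\,\frac{\log n}{\sqrt n}\right) \le \frac{c_2}{n^{\alpha}},
\]
where $\alpha \ge 2$.

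Next I would apply countable subadditivity of $\P$ to the union over $n > N$, obtaining the bound $\sum_{n > N} c_2\, n^{-\alpha}$. Since $\alpha \ge 2$, this is at most $c_2 \sum_{n > N} n^{-2}$, and comparing the (decreasing) summands with the integral $\int_N^\infty x^{-2}\,dx = 1/N$ gives $\sum_{n > N} n^{-2} \le 1/N$, hence the asserted bound $c_2/N$. I expect no real obstacle: the substantive work is entirely in Theorem \ref{Szab}, and the Corollary is the routine observation that its error probabilities form a summable series with a clean tail. The only point to watch is keeping the constants $c$ and $c_2$ consistent with those in Theorem \ref{Szab}, which is immediate.
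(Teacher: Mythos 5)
Your proof is correct and takes essentially the same route as the paper: apply countable subadditivity to reduce to $\sum_{n>N} c_2 n^{-\alpha}$, then use $\alpha\ge 2$ and the integral/tail comparison $\sum_{n>N} n^{-2}\le 1/N$. The paper states these two inequalities without spelling out the tail estimate; you merely make that last step explicit.
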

\begin{proof}
By Theorem \ref{Szab},
\[
	\P\left( \bigcup_{n>N}\left\{d(W,\ell_{X^{(n)}}) \ge c \frac{\log n}{\sqrt{n}}\right\}\right)\le \sum_{n>N} \frac{c_2}{n^\alpha} \le \frac{c_2}{N}.
\]
\end{proof}

\section{Application to Kolmogorov complexity}

The following lemma is a variation on a well-known fact about Kolmogorov complexity.
\begin{lem}\label{Kobe}
If $\P$ is a distribution on sequences 
\[
	\{x_n\}_{n\in\N}\in\prod_{n\in\N} \{\pm 1\}^n = \{\pm 1\}^{0}\times \{\pm 1\}^{1}\times\cdots
\]
such that the marginal distribution of each $x_n$ is uniform on $\{\pm 1\}^n$, then
\[
	\P(\exists b\,\,\forall n\,\,K(x_n)\ge n-b)=1.
\]
\end{lem}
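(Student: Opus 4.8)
The plan is to run the classical Kraft-inequality argument (the measure-theoretic half of the proof that a uniformly random real is Martin-L\"of random), but organized so that the union bound is taken over \emph{programs} of the underlying prefix-free machine rather than over the index $n$. This reorganization is essential: a direct union bound over $n$ is worthless here, since $\P(K(x_n)\le n-b)\le 2^{-b}$ for \emph{every} $n$, so $\sum_n\P(K(x_n)\le n-b)=\infty$.

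First I would fix $b\in\N$ and let $E_b=\{(x_m)_m:\exists n\ K(x_n)\le n-b\}$, which is a countable union of cylinder events and hence measurable. Let $M$ be the prefix-free machine defining $K$. For every pair $(n,y)$ with $y\in\{\pm1\}^n$ and $K(y)\le n-b$, fix a shortest $M$-program $p_{n,y}$, so $M(p_{n,y})=y$ and $|p_{n,y}|=K(y)\le n-b$; since $M(p_{n,y})$ recovers $y$ and $n=|y|$, the assignment $(n,y)\mapsto p_{n,y}$ is injective into $\operatorname{dom}(M)$. Using that the marginal of $x_n$ is uniform on $\{\pm1\}^n$, and that $|p_{n,y}|\le n-b$ forces $2^{-n}\le 2^{-|p_{n,y}|-b}$,
\[
\P(E_b)\ \le\ \sum_{(n,y)}\P(x_n=y)\ =\ \sum_{(n,y)}2^{-n}\ \le\ 2^{-b}\sum_{(n,y)}2^{-|p_{n,y}|}\ \le\ 2^{-b}\sum_{p\in\operatorname{dom}(M)}2^{-|p|}\ \le\ 2^{-b},
\]
the final step being Kraft's inequality for the prefix-free set $\operatorname{dom}(M)$.

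Then I would observe that $E_1\supseteq E_2\supseteq\cdots$, that $\bigcap_b E_b$ is precisely the complement of the event $\{\exists b\ \forall n\ K(x_n)\ge n-b\}$, and conclude by continuity of measure from above that
\[
\P(\exists b\ \forall n\ K(x_n)\ge n-b)\ =\ 1-\P\Bigl(\bigcap_b E_b\Bigr)\ =\ 1-\lim_{b\to\infty}\P(E_b)\ =\ 1.
\]

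The step I would flag as the crux is the very first choice: not to estimate $\P(E_b)$ by $\sum_n\P(K(x_n)\le n-b)$, but to index the union by witnessing programs so that prefix-freeness supplies the convergence. After that, the rest is routine bookkeeping --- measurability of $E_b$, injectivity of $(n,y)\mapsto p_{n,y}$, and continuity from above.
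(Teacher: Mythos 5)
Your argument is correct and is essentially the paper's proof: the paper likewise takes a union bound over the low-complexity strings $\sigma$ (equivalently your pairs $(n,y)$), uses the uniform marginal to replace $\P(x_{|\sigma|}=\sigma)$ by $2^{-|\sigma|}\le 2^{-K(\sigma)-b}$, and then applies Kraft's inequality $\sum_\sigma 2^{-K(\sigma)}\le\sum_{p\text{ halts}}2^{-|p|}\le 1$ to get $\P(E_b)\le 2^{-b}$. You spell out the injection into $\operatorname{dom}(M)$ and the final continuity-from-above step, which the paper leaves implicit, but the route is the same.
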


\begin{proof}
Using $\sum_{\text{all }\sigma} 2^{-K(\sigma)} \le \sum_{p\text{ halts}} 2^{-|p|} \le 1$, we have
\begin{alignat*}{1}
	\P(\exists n\,\,K(x_n)<n-b) & \le \sum_{\{\sigma:K(\sigma)<|\sigma|-b\}} \P(x_{|\sigma|}=\sigma)=\sum_{\{\sigma:K(\sigma)<|\sigma|-b\}} 2^{-|\sigma|} \\
	& \le \sum_{\text{all }\sigma} 2^{-(K(\sigma)+b)} \le 2^{-b}.
\end{alignat*}
\end{proof}

We are now ready to show that Asarin's Theorem holds with the bound $n^{-1/10}$ replaced by $O(\,{\log n}/{\sqrt{n}})$.

\begin{thm}\label{AsarinImprov}
$W\in C[0,1]$ is a Martin-L\"of random Brownian motion if and only if there are constants $b$, $c$ such that for all but finitely many $n\in\N$ there is a string $x=(x_1,\ldots,x_n)\in\{\pm 1\}^n$ such that 
\[
	d(\ell_x,W)\le \frac{c \log n}{\sqrt{n}},\quad\text{and}\quad K(x_1,\ldots,x_n)\ge n-b.
\]
\end{thm}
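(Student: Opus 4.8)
The plan is to prove the two directions separately, using Corollary \ref{gaussian} and Lemma \ref{Kobe} for the forward direction ($W$ random $\Rightarrow$ existence of a close, incompressible approximant) and a direct covering argument for the converse. For the forward direction, I would consider the joint distribution from Theorem \ref{Szab}. By Corollary \ref{gaussian}, for each $N$ the event $E_N=\bigcup_{n>N}\{d(W,\ell_{X^{(n)}})\ge c\log n/\sqrt n\}$ has probability at most $c_2/N$; since the $X^{(n)}$ have uniform marginals on $\{\pm1\}^n$ (parts 1--2 of Theorem \ref{Szab}), Lemma \ref{Kobe} says that with probability $1$ there is a $b$ with $K(X^{(n)})\ge n-b$ for all $n$. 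The key point is that the \emph{pair} (the property ``$W$ has a close incompressible approximant for all but finitely many $n$'') holds on a set of $\P$-measure $1$ — indeed on $\bigcap_N E_N^c$ intersected with the full-measure set from Lemma \ref{Kobe}, which has measure $1$. Then I must upgrade ``measure $1$'' to ``every Martin-L\"of random $W$'': the complement of this good set should be a Martin-L\"of null set. This requires checking that the good set is, modulo a null set, $\Sigma^0_2$ or that its complement is covered by a uniformly-$\Sigma^0_1$ sequence of the sort in the definition of $M_0$; the sets $E_N$ are built from the computable KMT/C\"org\H{o}--R\'ev\'esz bounds and the balls $\ell_{X^{(n)}}$, and the constant-$b$ exception in Lemma \ref{Kobe} is handled by the $2^{-b}$ tail bound in its proof, so each failure level has small computable measure. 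Hence any $W\notin M_0$ lies in the good set and has the desired approximants.

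For the converse, suppose $W$ is \emph{not} Martin-L\"af random, i.e.\ $W\in M_0$; I must show $W$ has no close incompressible approximant for infinitely many $n$, or rather that the set of $W$ admitting such approximants for all large $n$ (with some fixed $b,c$) is a Martin-L\"of null set. Fix $b$ and $c$. For each $n$, let $G_n$ be the set of $f\in C[0,1]$ such that there is $x\in\{\pm1\}^n$ with $d(\ell_x,f)\le c\log n/\sqrt n$ and $K(x)\ge n-b$. The number of such ``incompressible'' strings $x$ is at most $2^{n}\cdot 2^{-(\,\cdot\,)}$... more precisely, $\#\{x\in\{\pm1\}^n: K(x)\ge n-b\}\le 2^n$, which is too crude; instead I use that $\#\{x: K(x)\le n-b\}$ can be large but the relevant estimate is a \emph{Wiener-measure} bound: $\P(G_n)\le \sum_{x:K(x)\ge n-b}\P\big(d(\ell_x,W)\le c\log n/\sqrt n\big)$, and each ball of radius $c\log n/\sqrt n$ around a walk path $\ell_x$ has small Wiener measure. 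The main obstacle is exactly this small-ball estimate combined with the counting: one needs $\sum_{x\in\{\pm1\}^n}\P\big(d(\ell_x,W)\le c\log n/\sqrt n\big)$ to be summable in $n$ (after the $\limsup$, using Borel--Cantelli), so that $\bigcap_N\bigcup_{n>N}G_n$ — no wait, we want $\bigcup_N\bigcap_{n>N}G_n$ — has measure zero, i.e.\ $\P(\bigcap_{n>N}G_n)=0$ for every $N$, which follows if $\P(G_n)\to 0$ (or is summable) since these events are not independent but $\bigcap_{n>N}G_n\subseteq G_m$ for every $m>N$.

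I expect the small-ball / counting estimate to be the crux. The natural approach: the tube $\{f:d(\ell_x,f)\le \rho\}$ has Wiener measure controlled by small-ball probabilities for Brownian motion around a fixed continuous center, and summing $\P(d(\ell_x,W)\le\rho)$ over all $2^n$ strings $x$ of length $n$ with $\rho=c\log n/\sqrt n$ should be bounded by a constant times, say, $n^{-\alpha}$ for suitable $c$ — here one genuinely uses that $\rho\asymp\log n/\sqrt n$ and not $o(\sqrt{\log n}/\sqrt n)$, matching the optimality claim in the abstract. Concretely I would either invoke a Cameron--Martin / Anderson-type inequality to reduce to the small-ball probability $\P(d(0,W)\le\rho)$ times a Radon--Nikodym factor summed over the $2^n$ drifts $\ell_x$ (whose Cameron--Martin norms are $\|\ell_x'\|_2^2\asymp n$), or appeal directly to a quantitative form of the KMT coupling in reverse. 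Once $\P(G_n)$ is summable in $n$, Borel--Cantelli gives that $\limsup_n G_n$ — hence $\bigcup_N\bigcap_{n>N}G_n$ — is Wiener-null, and effectivizing the $G_n$ (they are finite unions of balls in $\mf S$ with computable radii, and $K(x)\ge n-b$ is co-c.e.\ so $G_n$ is $\Sigma^0_1$ uniformly) shows the bad set is a Martin-L\"of null set contained in $M_0$'s complement's complement — i.e.\ any $W$ with close incompressible approximants for all large $n$ avoids this null set and is therefore Martin-L\"of random.
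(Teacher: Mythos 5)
Your forward direction (Martin-L\"of random implies existence of a close incompressible approximant) follows the paper's route: couple via Theorem \ref{Szab}, use Corollary \ref{gaussian} and Lemma \ref{Kobe}, and effectivize. The paper makes the effectivization explicit by exhibiting the $\Sigma^0_1$ classes $U_b = \bigcup_{n>b}\bigcap_{x\in\{\pm 1\}^n}\{W : d(\ell_x,W)\le c\log n/\sqrt{n} \to K(x)<n-b\}$ with $\P(U_b)\le c_2/b + 2^{-b}$; this is essentially what you sketch, though less precisely.

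The converse direction, however, has a fatal gap. You let $G_n$ be the set of $W$ having some $x\in\{\pm 1\}^n$ with $d(\ell_x,W)\le c\log n/\sqrt{n}$ and $K(x)\ge n-b$, and you propose to prove $\P(G_n)$ is summable so that $\bigcup_N\bigcap_{n>N}G_n$ is Wiener-null by Borel--Cantelli. But $\P(G_n)$ tends to $1$, not to $0$: the forward direction (via the KMT coupling) shows precisely that almost every $W$ lies in $\bigcap_{n>b}G_n$ for some $b$, so $\bigcup_N\bigcap_{n>N}G_n$ has measure one, not zero. Accordingly the small-ball sum $\sum_{x}\P(d(\ell_x,W)\le c\log n/\sqrt{n})$ is bounded below by the measure of the union of the tubes, which is near $1$, so the estimate you want is simply false. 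There is also a logical misstep at the end: even if the set of $W$ with close incompressible approximants were Martin-L\"of null, membership in a null set establishes that $W$ is atypical, not that $W$ is Martin-L\"of random. The paper avoids all of this by noting that the ``if'' direction comes for free: for large $n$ one has $c\log n/\sqrt{n} < n^{-1/10}$, so a close incompressible approximant at the sharper radius already satisfies the hypothesis of Asarin's original Theorem \ref{Asar}, which yields Martin-L\"of randomness.
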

\begin{proof}
In light of Asarin's Theorem, Theorem \ref{Asar}, it suffices to show the \emph{only if} direction.
For $b\in\N$, let
\[
	U_b=\bigcup_{n> b} \,\bigcap_{x\in\{\pm 1\}^n}\left\{W: \left(d(\ell_x,W)\le  \frac{c \log n}{\sqrt{n}}\right) \to (K(x)< n-b)\right\}.
\]
We verify that $U_{b}$ is a $\Sigma^0_1$ class: 
\begin{enumerate}
\item[(i)] $d(\ell_x,W)>c\log n/\sqrt{n}$ holds if and only if $W$ belongs to some $T_n$ such that all the members $B$ of $T_n$ satisfy $d(\ell_x,B)>c\log n/\sqrt{n}$, and 
\item[(ii)] the property $K(x)<n-b$ asserts that a short description of $x$ exists.
\end{enumerate}
Let $X^{(n)}$ be a random variable as in Theorem \ref{Szab}, and let
\[
	V_b=\bigcup_{n> b}  \left\{W: \left(d(\ell_{X^{(n)}},W)\le  \frac{c \log n}{\sqrt{n}}\right) \to (K(X^{(n)})< n-b)\right\}. 
\]
Note that $V_b$ is a random set, i.e. it is itself a random variable, but we will only use the following two auxiliary properties of $V_b$: $V_b$ always contains the deterministic set $U_b$, and $V_b$ has small Wiener measure. Indeed, by Corollary \ref{gaussian},
\[
	\mathbb P \left( \bigcup_{n> b} \left\{W: d(\ell_{X^{(n)}},W)>  \frac{c \log n}{\sqrt{n}}\right\}\right)\le \frac{c_2}{b},
\]
and by Lemma \ref{Kobe},
\[
	\P((\exists n)\, K(X^{(n)})< n-b)\le 2^{-b},
\]
so 
\[
	\P (U_b)\le \P (V_b) \le \frac{c_2}{b}+2^{-b},
\]
since clearly $U_b\subseteq V_b$.
Thus if $W$ is Martin-L\"of random, then
there is some $b$ such that for all $n>b$, there is some $x=(x_1,\ldots,x_n)$ with $\ell_x(t)$ that lies within $\frac{c\log n}{\sqrt{n}}$ of $W(t)$ and $K(x)\ge n-b$.
 \end{proof}
 
\begin{rem}
A complementary result to Theorem \ref{AsarinImprov} was obtained by Fouch\'e \cite{F1}*{Theorem 5}, who showed that from the first $n$ bits of a Martin-L\"of random real $A$ one can uniformly compute a finite linear combination of piecewise linear functions (of a different type from the ones considered here) that lies within $O(\log n/\sqrt{n})$ of an associated Martin-L\"of random path of Brownian motion $W_{A}$.  
\end{rem}
 
\section{A limitation on further improvements}
The rate $O(\log n/\sqrt{n})$ in our improved version of Asarin's Theorem, Theorem~\ref{AsarinImprov}, cannot be further improved to $\frac{1}{2}\sqrt{\frac{\log n}{n}}$. We now prove this by using L\'evy's analysis of the modulus of continuity of Brownian motion. A function $g:[0,1]\to\R$ is \emph{H\"older continuous of order $\gamma$} if there is a constant $C$ such that for all $x,y\in [0,1]$, $|f(x)-f(y)|\le C |x-y|^\gamma$. Wiener \cite{Wiener} showed that Brownian motion is almost surely H\"older continuous of any order $\gamma<1/2$, but this does not extend to order $\gamma=1/2$ (see for example Durrett \cite{Durrett}*{Exercise 2.4, p. 382}), and L\'evy obtained even more precise information.  

\begin{df}[L\'evy's modulus of continuity]
For a path of Brownian motion $W$, let \[
	\osc(\delta)=\sup\{|W(s)-W(t)| : s,t\in [0,1], |t-s|<\delta\}.
\]
\end{df}
\begin{thm}[L\'evy \cite{Levy}; see Durrett \cite{Durrett}*{p. 394}]\label{L}
Almost surely,
\[
	\limsup_{\delta\to 0} \osc(\delta)/(\delta\log(1/\delta))^{1/2}=\sqrt{2}.
\]
\end{thm}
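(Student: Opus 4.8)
Since this is L\'evy's classical theorem (see \cite{Durrett}*{p.~394}), one could simply invoke the textbook proof; but let me describe how I would reconstruct it. Write $\phi(\delta)=\sqrt{\delta\log(1/\delta)}$, so the asserted value says that $\osc(\delta)\sim\sqrt2\,\phi(\delta)$ in the $\limsup$ sense. It suffices to prove, almost surely, the two one-sided statements
\[
\limsup_{\delta\to 0}\osc(\delta)/\phi(\delta)\ \ge\ \sqrt2\qquad\text{and}\qquad\limsup_{\delta\to 0}\osc(\delta)/\phi(\delta)\ \le\ \sqrt2 .
\]
The only tools I would use are the Borel--Cantelli lemma and the two-sided Gaussian tail estimate: $\P(|Z|>x)\le e^{-x^2/2}$ and $\P(Z>x)\ge\frac{x}{1+x^2}\cdot\frac{1}{\sqrt{2\pi}}\,e^{-x^2/2}$ for $Z\sim N(0,1)$, $x>0$. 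Everything is phrased via dyadic scales $2^{-m}$ and the increments $\Delta^m_k=W((k+1)2^{-m})-W(k2^{-m})$, $0\le k<2^m$: each $\Delta^m_k$ is $N(0,2^{-m})$, for fixed $m$ the $\Delta^m_0,\dots,\Delta^m_{2^m-1}$ are independent, and $\sqrt{2\cdot2^{-m}\cdot m\log 2}=\sqrt2\,\phi(2^{-m})$ is the ``expected'' size of such an increment.

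\emph{Lower bound (the easy direction).} Fix $\eps>0$. At generation $m$, by independence the probability that \emph{every} $|\Delta^m_k|$ is $\le(1-\eps)\sqrt2\,\phi(2^{-m})$ is $(1-p_m)^{2^m}\le e^{-p_m2^m}$, where the lower Gaussian tail gives $p_m=\P\big(Z>(1-\eps)\sqrt{2m\log 2}\big)\ge c\,2^{-(1-\eps)^2 m}/\sqrt m$ for some $c>0$; hence $p_m2^m\ge c\,2^{(2\eps-\eps^2)m}/\sqrt m\to\infty$ and $\sum_m e^{-p_m2^m}<\infty$. By Borel--Cantelli, almost surely for all large $m$ some $\Delta^m_k$ exceeds $(1-\eps)\sqrt2\,\phi(2^{-m})$, witnessing a pair $s<t$ with $t-s=2^{-m}$. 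With $\delta_m=2^{-m}(1+1/m)$ (so $t-s<\delta_m$ and $\phi(\delta_m)\sim\phi(2^{-m})$), this gives $\osc(\delta_m)/\phi(\delta_m)\ge(1-\eps)\sqrt2\,(1+o(1))$, hence $\limsup_{\delta\to0}\osc(\delta)/\phi(\delta)\ge(1-\eps)\sqrt2$; let $\eps\downarrow 0$.

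\emph{Upper bound (the hard direction).} Fix $\eps>0$ and set $m=\lceil(1+\eps)n\rceil$, so the grid $2^{-m}$ is finer than the scale $2^{-n}$ by the sub-polynomial factor $2^{\eps n}$. The key claim, proved by Borel--Cantelli, is: almost surely, for all large $n$ and \emph{every} dyadic pair $u<v$ of generation $m$ with $v-u$ of order $2^{-n}$, one has $|W(v)-W(u)|\le(1+\eps)\sqrt2\,\phi(v-u)$. There are $\le 2^m$ choices of $u$ and $O(2^{m-n})$ admissible gaps $v-u$, so $O(2^{2m-n})=O(2^{(1+2\eps)n})$ such pairs, and for each the upper Gaussian tail gives $\P\big(|W(v)-W(u)|>(1+\eps)\sqrt2\,\phi(v-u)\big)\le (v-u)^{(1+\eps)^2}=2^{-(1+\eps)^2 n+O(1)}$; since $1+2\eps<(1+\eps)^2$, the union bound over pairs summed over $n$ converges. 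Now given arbitrary $s<t$ with $t-s$ small, pick $n$ with $t-s\asymp 2^{-n}$ and round $s$ down and $t$ up to generation $m$, obtaining $u\le s<t\le v$. Since $2^{-m}=o(t-s)$, the rounded gap satisfies $v-u=(1+o(1))(t-s)$, so the claim yields $|W(v)-W(u)|\le(1+\eps)\sqrt2\,\phi(v-u)=(1+\eps)(1+o(1))\sqrt2\,\phi(t-s)$; and the two rounding corrections $|W(s)-W(u)|$, $|W(v)-W(t)|$ are oscillations of $W$ over intervals of length $O(2^{-m})=o(t-s)$, hence $O(\sqrt m\,2^{-m/2})=o(\phi(t-s))$ by the crude (non-sharp) bound $\osc(\eta)=O(\sqrt{\eta\log(1/\eta)})$, itself an elementary Borel--Cantelli estimate. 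Adding up, $|W(t)-W(s)|\le(1+\eps)(1+o(1))\sqrt2\,\phi(t-s)$ for all small $t-s$, so $\osc(\delta)\le(1+\eps)(1+o(1))\sqrt2\,\phi(\delta)$; let $\delta\to0$ and then $\eps\downarrow 0$.

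\emph{Where the difficulty is.} The lower bound is essentially a single application of the independence form of Borel--Cantelli. The real work is the \emph{sharp} constant $\sqrt2$ in the upper bound: naive dyadic chaining --- bounding each $\Delta^j_k$ by its own threshold and telescoping --- produces a geometric-type series whose tail is comparable to its leading term, which yields $\osc(\delta)=O(\phi(\delta))$ but only with a constant strictly larger than $\sqrt2$. Getting exactly $\sqrt2$ forces the balancing act above: the grid must be fine enough ($o(t-s)$) that the rounding corrections are $o(\phi(t-s))$, yet coarse enough that the number $O(2^{(1+2\eps)n})$ of relevant dyadic pairs is still beaten by the Gaussian tail $2^{-(1+\eps)^2 n}$ --- which works precisely because $1+2\eps<(1+\eps)^2$ for every $\eps>0$ --- and the tight rounding is also what lets the dyadic estimate pass to $\osc(\delta)$ for \emph{all} small $\delta$ with no loss of constant.
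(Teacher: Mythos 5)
The paper states this theorem as a cited classical result (L\'evy, with a pointer to Durrett, p.~394) and gives no proof of its own, so there is no internal argument to compare yours against. Your sketch is a correct rendition of the standard two-sided dyadic argument for L\'evy's modulus: the lower bound from independence of the generation-$m$ increments, the Gaussian lower tail, and Borel--Cantelli; the upper bound by working on a grid at generation $m\approx(1+\eps)n$, which is fine enough relative to the scale $2^{-n}$ that the rounding corrections are $o(\phi(t-s))$ (controlled by a crude, non-sharp modulus bound, so no circularity), yet coarse enough that the union bound over the $O\bigl(2^{(1+2\eps)n}\bigr)$ relevant pairs is beaten by the Gaussian tail $2^{-(1+\eps)^2 n}$, with convergence precisely because $(1+\eps)^2>1+2\eps$. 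The minor informalities --- the ``of order $2^{-n}$'' bookkeeping, and writing $\P(|Z|>x)\le e^{-x^2/2}$ without the factor of $2$ or the restriction to $x$ bounded away from $0$ --- are cosmetic and do not affect the structure. Your closing observation about why naive dyadic chaining loses the sharp constant, and how the fine-grid rounding device recovers $\sqrt{2}$, is accurate.
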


Thus, we will give a lower bound for possible rates in Asarin's Theorem by using the fact that a
typical path of Brownian motion should have an increment $|W(s) - W(t)| \approx n^{-1/2}\sqrt{\log n}$
somewhere in an interval of length $1/n$, while an increment $|\ell_{x^{(n)}}(s) - \ell_{x^{(n)}}(t)|$
of an approximating broken line is only $ n^{-1/2}$ on the same interval.

\begin{cor}\label{LL}
Almost surely, for each $\theta>0$ and $\eps>0$ there exists $0<\delta<\eps$ and $s,t\in [0,1]$ with $|t-s|<\delta$ such that 
\[
	|W(s)-W(t)|\ge (\sqrt{2}-\theta)(\delta\log(1/\delta))^{1/2}.
\]
\end{cor}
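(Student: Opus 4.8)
The plan is to derive Corollary~\ref{LL} directly from L\'evy's Theorem~\ref{L} by unwinding two suprema. Fix once and for all a path $W$ in the probability-one event on which $\limsup_{\delta\to 0}\osc(\delta)/(\delta\log(1/\delta))^{1/2}=\sqrt2$; since this event does not depend on $\theta$ or $\eps$, it suffices to verify, for each such $W$, that the stated conclusion holds for every $\theta>0$ and every $\eps>0$.

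First I would read off what the $\limsup$ says. Writing $g(\delta)=\osc(\delta)/(\delta\log(1/\delta))^{1/2}$, the equality $\limsup_{\delta\to 0}g(\delta)=\sqrt2$ means $\lim_{\eta\to 0^+}\sup_{0<\delta<\eta}g(\delta)=\sqrt2$; because $\eta\mapsto\sup_{0<\delta<\eta}g(\delta)$ is nondecreasing in $\eta$, this supremum is therefore $\ge\sqrt2$ for every $\eta>0$. Consequently, for every $\eta>0$ and every $\theta>0$ there is some $\delta\in(0,\eta)$ with $g(\delta)>\sqrt2-\theta$.

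Now, given $\theta>0$ and $\eps>0$, apply the previous step with $\eta=\min(\eps,1/2)$, the bound $1/2$ serving only to guarantee $\log(1/\delta)>0$ so the expressions are well defined. This yields a $\delta$ with $0<\delta<\eps$ and $\osc(\delta)>(\sqrt2-\theta)(\delta\log(1/\delta))^{1/2}$. Finally I would unwind the supremum defining $\osc(\delta)$: since $\osc(\delta)=\sup\{|W(s)-W(t)|:s,t\in[0,1],\ |t-s|<\delta\}$ strictly exceeds $(\sqrt2-\theta)(\delta\log(1/\delta))^{1/2}$, this value is not an upper bound for the set, so there exist $s,t\in[0,1]$ with $|t-s|<\delta$ and $|W(s)-W(t)|\ge(\sqrt2-\theta)(\delta\log(1/\delta))^{1/2}$, which is exactly the assertion.

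There is no substantial obstacle here; the content of the corollary is essentially a restatement of Theorem~\ref{L} with the two suprema (the one implicit in $\limsup$ and the one in the definition of $\osc$) spelled out. The only points needing a little care are ensuring $\delta<1$ so that $\log(1/\delta)>0$, and tracking strict versus non-strict inequalities, which is why I build the margin $\theta$ into the $\limsup$ step before passing to $\osc$.
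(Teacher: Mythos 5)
Your proof is correct and takes essentially the same approach as the paper: unwind the $\limsup$ in L\'evy's theorem to extract a $\delta<\eps$ with oscillation ratio near $\sqrt{2}$, then unwind the supremum defining $\osc(\delta)$ to extract $s,t$. You are in fact a bit more careful than the paper about the strict versus non-strict inequality needed to pass from the supremum $\osc(\delta)$ to a witnessing pair $s,t$, and about restricting $\delta<1$ so that $\log(1/\delta)>0$, but the argument is the same.
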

\begin{proof}
By Theorem \ref{L},
\[
	\lim_{\eps\to 0}\sup_{0<\delta<\eps} \osc(\delta)/(\delta\log(1/\delta))^{1/2}\ge \sqrt{2},
\]
so for each $\eps>0$ and $\theta>0$ there exists $0<\delta<\eps$ such that 
\[
	 \osc(\delta)/(\delta\log(1/\delta))^{1/2}\ge \sqrt{2}-\theta.
\]
Thus there exists $s,t\in [0,1]$ with $|t-s|<\delta$ and
\[
	|W(s)-W(t)|\ge (\sqrt{2}-\theta)(\delta\log(1/\delta))^{1/2}.
\]
\end{proof}

\begin{cor}\label{ofcourse}
Almost surely for Brownian motion $W$, there is no $n_{W}$ such that for all $n\ge n_{W}$ there is an $x\in\{\pm 1\}^{n}$ with 
\[
	d(\ell_{x},W)\le \frac{1}{2}\sqrt{\frac{\log n}{n}}.
\]
\end{cor}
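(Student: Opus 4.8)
The plan is to prove the corollary pointwise on the full-measure event furnished by Corollary~\ref{LL}, exploiting the fact that an approximating broken line $\ell_x$ with $n$ pieces is globally $\sqrt n$-Lipschitz, so it cannot track a Brownian increment of order $\sqrt{\log n/n}$ that occurs inside an interval of length $\approx 1/n$. Fix once and for all a $\theta$ with $0<\theta<\sqrt2-1$, and choose $N_0$ so large that $1+\dfrac{n}{(n-1)\sqrt{\log n}}<\sqrt2-\theta$ for all $n\ge N_0$; this is possible since the left-hand side tends to $1$. Now fix a path $W$ in the Corollary~\ref{LL} event and suppose toward a contradiction that some $n_W$ has the stated property.

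Here is how I would derive the contradiction. Apply Corollary~\ref{LL} to $W$ with this $\theta$ and with $\eps$ chosen small enough that $1/\eps>\max(n_W,N_0,4)$; this yields $0<\delta<\eps$ and $s,t\in[0,1]$ with $|t-s|<\delta$ and $|W(s)-W(t)|\ge(\sqrt2-\theta)(\delta\log(1/\delta))^{1/2}$. Put $n=\lceil 1/\delta\rceil$, so that $1/n\le\delta<1/(n-1)$ and $n\ge 1/\delta>1/\eps>\max(n_W,N_0,4)$. In particular $n\ge n_W$, so by hypothesis there is $x\in\{\pm1\}^n$ with $d(\ell_x,W)\le\frac12\sqrt{\log n/n}$. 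Since $\ell_x$ is continuous and piecewise linear with every slope equal to $\pm\sqrt n$, it is $\sqrt n$-Lipschitz on $[0,1]$, and the triangle inequality gives
\[
|W(s)-W(t)|\le 2\,d(\ell_x,W)+|\ell_x(s)-\ell_x(t)|\le\sqrt{\frac{\log n}{n}}+\sqrt n\,|s-t|<\sqrt{\frac{\log n}{n}}+\frac{\sqrt n}{n-1}.
\]
On the other hand, $t\mapsto t\log(1/t)$ is increasing on $(0,1/e)$, and $1/n\le\delta<1/(n-1)\le 1/3<1/e$ because $n\ge4$; hence $\delta\log(1/\delta)\ge\frac1n\log n$, and Corollary~\ref{LL} gives $|W(s)-W(t)|\ge(\sqrt2-\theta)\sqrt{\log n/n}$. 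Combining the two bounds and dividing through by $\sqrt{\log n/n}$ yields $\sqrt2-\theta\le 1+\dfrac{n}{(n-1)\sqrt{\log n}}$, which contradicts the choice of $N_0$ since $n\ge N_0$. Therefore no such $n_W$ exists, and the corollary holds on the full-measure event of Corollary~\ref{LL}.

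I do not expect a genuine obstacle. The only points needing care are routine: the global Lipschitz estimate for $\ell_x$ across breakpoints (immediate from continuity together with piecewise linearity), the interval of monotonicity of $t\log(1/t)$, and the bookkeeping converting L\'evy's continuum parameter $\delta$ into the integer $n=\lceil1/\delta\rceil$ --- where one must check that the discretization error $\sqrt n/(n-1)$ is of smaller order than the main term $\sqrt{\log n/n}$, so that it is harmless. It is worth noting that the same computation shows that the rate in Theorem~\ref{AsarinImprov} cannot be lowered to $c\sqrt{\log n/n}$ for any constant $c<1/\sqrt2$; the clean value $\tfrac12$ is chosen merely to keep $\sqrt2-\theta$ comfortably above $2c=1$.
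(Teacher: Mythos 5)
Your proof is correct and follows essentially the same strategy as the paper's: invoke Corollary~\ref{LL} (L\'evy modulus), convert the continuum scale $\delta$ to an integer $n\approx 1/\delta$, use the $\sqrt n$-Lipschitz property of $\ell_x$ together with the triangle inequality, and note that the discretization term is $o(\sqrt{\log n/n})$. Your bookkeeping (taking $n=\lceil 1/\delta\rceil$ and dividing through by $\sqrt{\log n/n}$ directly) is a little cleaner than the paper's, which takes $1/(n+1)<\delta<1/n$, introduces an auxiliary $\gamma<1/2$, and squares the resulting inequality, but the underlying argument is the same, as is your closing remark that the method rules out any constant $c<1/\sqrt2$.
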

\begin{proof}
 Suppose otherwise; i.e., with positive probability there is an $n_{W}$ such that for all $n\ge n_{W}$, there is a length $n$ walk $g$ within $f(n):=\frac{1}{2}\sqrt{\frac{\log n}{n}}$ of $W$. 
 
Let $0<\gamma<1/2$. Since $\gamma>0$, there is a $\theta>0$ such that
\[
	2-\gamma \le \left(\frac{\sqrt{2}-\theta}{1+\theta}\right)^{2}.
\]
Let $\eps_{W}>0$ be small enough to guarantee that if $0<\delta\le\delta'<\eps_{W}$ and $\frac{1}{n+1}<\delta$, then
\begin{enumerate}
\item $(\delta\log(1/\delta))^{1/2}\le (\delta'\log(1/\delta'))^{1/2}$,

\item$n\ge n_{W}$,
\item $1\le \theta\sqrt{\log n}$, and
\item $n\ge 2$.
\end{enumerate}

 By Corollary \ref{LL}, for almost all $W$ we have that for all $\eps>0$, in particular for $\eps=\eps_{W}$ when $\eps_{W}$ exists, there exist $0<\delta<\eps$ and $s,t\in [0,1]$ so that 
\[
	|s-t| < \delta \quad\text{and}\quad
	(\sqrt{2}-\theta)\sqrt{\delta\log(1/\delta)}\le |W(t)-W(s)|.
\]
Let $n$ be such that $\frac{1}{n+1}<\delta<\frac{1}{n}$. Then for such $s$, $t$, using (1), (2), and (3), 
\[
	(\sqrt{2}-\theta)\sqrt{\frac{\log(n+1)}{n+1}}\le
	(\sqrt{2}-\theta)\sqrt{\delta\log(1/\delta)} \le |W(t)-W(s)|
\]
\[
	\le |W(t)-g(t)|+|g(t)-g(s)|+|g(s)-W(s)|
	\le 2f(n) + n^{-1/2}\le (1+\theta)\sqrt{\frac{\log n}{n}},
\]
and hence using (4),
\[
	2-\gamma\le
	\left(\frac{\sqrt{2}-\theta}{1+\theta}\right)^{2}
	\le
	\frac{\log n}{\log(n+1)}\cdot\frac{n+1}{n}\le \frac{3}{2},
\]
which contradicts $\gamma<1/2$. 
\end{proof}

We deduce that Asarin's Theorem, Theorem \ref{Asar}, does not hold for the bound $\frac{1}{2}\sqrt{\frac{\log n}{n}}$.

\begin{cor} \label{optimum}
The assumption that $W\in C[0,1]$ is a Martin-L\"of random Brownian motion does \textbf{not} imply that for all but finitely many $n\in\N$ there is any string $x=(x_1,\ldots,x_n)\in\{\pm 1\}^n$ such that 
\[
	d(\ell_x,W)\le \frac{1}{2}\sqrt{\frac{\log n}{n}}
\]
at all (let alone such a string with $K(x_1,\ldots,x_n)\ge n-c$).
\end{cor}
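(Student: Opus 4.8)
The plan is to derive Corollary \ref{optimum} directly from Corollary \ref{ofcourse}, together with the fact recalled in Section \ref{KobeBerkeley} that the set $M_0$ of paths that fail to be Martin-L\"of random is a Wiener-null set (Asarin's theorem that the union of all Martin-L\"of null sets is itself Martin-L\"of null).

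First I would unwind the quantifier "for all but finitely many $n$": the statement that for all but finitely many $n\in\N$ there is a string $x\in\{\pm1\}^n$ with $d(\ell_x,W)\le\frac12\sqrt{\log n/n}$ is logically equivalent to the assertion that there exists $n_W\in\N$ such that for every $n\ge n_W$ some such $x$ exists. Let $B\subseteq C[0,1]$ be the set of $W$ for which this holds. Then Corollary \ref{ofcourse} says precisely that $\P(B)=0$; in particular $B$ is contained in a Wiener-null set.

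Next, since $M_0$ is also a Wiener-null set, the union $B\cup M_0$ is null, hence a proper subset of $C[0,1]$ — indeed its complement has full Wiener measure. Choosing any $W\in C[0,1]\setminus(B\cup M_0)$ (almost every path will do), we get a path that is Martin-L\"of random, since it avoids $M_0$, yet lies outside $B$, meaning it is \emph{not} the case that for all but finitely many $n$ there is a string $x$ with $d(\ell_x,W)\le\frac12\sqrt{\log n/n}$. This is exactly the negation of the implication in the statement. The parenthetical strengthening ("let alone such a string with $K(x_1,\dots,x_n)\ge n-c$") is then immediate, since any string meeting both conditions in particular meets the distance condition, so if no string meets the distance condition alone then certainly none meets both.

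There is essentially no obstacle here beyond this routine measure-theoretic bookkeeping; the substantive work — the lower bound forcing a Brownian increment of size about $n^{-1/2}\sqrt{\log n}$ on some interval of length $1/n$, which no broken line with slopes $\pm\sqrt n$ can match — is already carried out in Corollary \ref{ofcourse} via L\'evy's modulus of continuity (Theorem \ref{L}) and its consequence Corollary \ref{LL}. The only point deserving a moment's attention is the equivalence between "for all but finitely many $n$" and "there exists $n_W$ such that for all $n\ge n_W$", which is what lets Corollary \ref{ofcourse} be applied verbatim.
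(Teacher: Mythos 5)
Your proposal is correct and takes essentially the same route as the paper: both arguments amount to observing that the Martin-L\"of random paths form a set of full Wiener measure, whereas Corollary \ref{ofcourse} shows the set of $W$ admitting such approximations for all large $n$ is Wiener-null, so the two sets cannot coincide. The paper phrases this as a contradiction, while you phrase it as exhibiting a witness $W$ in the complement of $B\cup M_0$, but the content is the same.
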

\begin{proof}
The set of Martin-L\"of random Brownian paths has positive measure (in fact measure 1), and so we would have a contradiction to Corollary \ref{ofcourse}.
\end{proof}

\section{Future work}\label{Future}
\subsection{Further narrowing of the bounds.} 
\begin{que}\label{mainq}
Does Asarin's Theorem hold for a rate $r(n)=o(\log n/\sqrt{n})$?
\end{que}

If the answer to Question \ref{mainq} is \emph{yes}, then the ball of radius $r(n)$ around $W$ is ``typical'' enough to contain $\ell_{x}$ for a string $x\in\{\pm 1\}^{n}$ with $K(x)\ge^{+}n$. If we then consider the random variable $X$ that selects (uniformly) a string $x\in \{\pm 1\}^{n}$ with $d(W,\ell_{x})\le r(n)$, we might suspect that $X$ would be (at least approximately) uniformly distributed on $\{\pm 1\}^{n}$ (for all but finitely many $n$). That $X$ could not literally be uniformly distributed is a consequence of the following result.

\begin{thm} \label{232}
Let $X_{k,n}$ $(k=1, \dots , n; n=1,2, \dots)$ be a triangular array of identically distributed random variables with zero expectation and unit variance such that $X_{1,n}, \dots , X_{n,n}$ are mutually independent. Let $S^{(n)}(k)=X_{1,n}+ \dots +X_{k,n}$ and $W^{(n)}(t)$ $(t \ge 0; n=1,2, \dots)$ be a sequence of Brownian motions such that
$$\max_{1 \le k \le n} \left|S^{(n)}(k) - W^{(n)}(k)\right| = o(\log n) \qquad \text{(a.s.)}. $$
Then the distribution of $X_{k,n}$ is standard normal, i.e., normal with mean 0 and variance 1.
\end{thm}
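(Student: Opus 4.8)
The plan is to identify the common law $F$ of the $X_{k,n}$ by pinning down its moment generating function $\psi(u)=\log\int e^{ux}\,dF(x)$: I will argue that $\psi$ is finite in a neighbourhood of $0$ and equals $u^2/2$ there, from which $F=N(0,1)$ follows, since then all moments of $F$ coincide with those of $N(0,1)$ and the Gaussian law is determined by its moments (alternatively one can phrase the last step through Cram\'er's theorem on decompositions of the normal law). That $\psi$ is finite near $0$ — equivalently, that $F$ has exponential tails — comes from a ``one big jump'' observation. If $\int e^{ux}\,dF=\infty$ for every $u>0$, then $\liminf_{x\to\infty}\bigl(-x^{-1}\log(1-F(x))\bigr)=0$, and since $\P(\max_{k\le n}X_{k,n}\le t)=F(t)^n\le e^{-n(1-F(t))}$ a short estimate shows that along a suitable subsequence $n_j\to\infty$ the largest increment $\max_{k\le n_j}X_{k,n_j}$ exceeds $M\log n_j$ with probability tending to $1$, for any fixed $M$. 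A single increment of size $\gtrsim M\log n$ at a step $k_0$ forces $|S^{(n)}(k_0)-W^{(n)}(k_0)|+|S^{(n)}(k_0-1)-W^{(n)}(k_0-1)|\ge |X_{k_0,n}|-\max_{k\le n}|W^{(n)}(k)-W^{(n)}(k-1)|$, and as the last maximum is only of order $\sqrt{\log n}=o(\log n)$ (maximum of $n$ standard Gaussians, whose law is unaffected by the coupling), this makes $\max_k|S^{(n)}(k)-W^{(n)}(k)|\ge \tfrac13 M\log n$ with probability tending to $1$ along $n_j$ (for $M$ large), contradicting the hypothesis, which forces $\P(\max_k|S^{(n)}(k)-W^{(n)}(k)|>\log n)\to 0$. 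The lower tail is symmetric, so $\psi$ is finite on some interval $(-u_0,u_0)$.

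With finite exponential moments, Cram\'er's theorem gives $n^{-1}\log\P(S^{(n)}(n)\ge an)\to -I(a)$, where $I$ is the Legendre transform of $\psi$, whereas the Brownian endpoint $W^{(n)}(n)\sim N(0,n)$ has large-deviation rate $a^2/2$; the key point is that a shift of size $o(\log n)$ changes the Gaussian tail only by a sub-polynomial factor, $\P\bigl(N(0,n)\ge an-o(\log n)\bigr)=e^{-na^2/2+o(\log n)}=n^{o(1)}e^{-na^2/2}$. Sandwiching $\P(S^{(n)}(n)\ge an)$ between $\P\bigl(W^{(n)}(n)\ge an\mp o(\log n)\bigr)$, up to the probability of the exceptional event on which the approximation exceeds $\eps\log n$, one wants to conclude that an exponential-order discrepancy $e^{\pm cn}$ between the two rates cannot survive, so that $I(a)=a^2/2$ for $a$ in a neighbourhood of $0$, whence $\psi(u)=u^2/2$ there by Legendre duality. (If $F$ is supported on a lattice one argues more directly — the strong law of large numbers pins $S^{(n)}(n)$ to a lattice point, hence at distance $\Omega(\log n)$ from the matching Gaussian quantile infinitely often — so we may assume $F$ non-lattice and invoke the local, Bahadur--Rao refinement of Cram\'er's estimate where sharper information is needed.)

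The step I expect to be the main obstacle is precisely this control of the exceptional event $\{\max_k|S^{(n)}(k)-W^{(n)}(k)|>\eps\log n\}$ at the large-deviation scale: the hypothesis supplies only that its probability tends to $0$, with no rate, whereas the quantities compared in the previous paragraph are exponentially small in $n$. Making the comparison rigorous should require exploiting the \emph{almost sure} form of the hypothesis rather than mere convergence in probability — for each $\eps$ the exceptional event occurs for only finitely many $n$, so on a set of probability close to $1$ the bound $\eps\log n$ holds simultaneously for all large $n$ — together with localizing a large deviation of the walk to a single block of length $\asymp\log n$ and playing it against the independent Brownian increment on that block; here the within-row independence of the $X_{k,n}$ is essential, and it is also what prevents the error from being cancelled by correlating increments. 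Once $\psi(u)=u^2/2$ near the origin is established, $F=N(0,1)$ is immediate.
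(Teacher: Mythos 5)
Your finite-exponential-moments step is essentially correct, but Step~2 has exactly the gap you yourself flag, and it is fatal as written: comparing $\P(S^{(n)}(n)\ge an)$ with $\P(W^{(n)}(n)\ge an\mp o(\log n))$ forces a comparison of events of probability $e^{-\Theta(n)}$, whereas the exceptional event $\{\max_k|S^{(n)}(k)-W^{(n)}(k)|>\eps\log n\}$ is controlled only to $o(1)$ with no usable rate. Even the almost-sure form of the hypothesis, used via Egorov, yields only a fixed $\delta>0$ failure probability on a set of measure $1-\delta$ — nowhere near exponentially small — so the endpoint sandwich does not identify $I(a)$ with $a^2/2$, and the lattice/Bahadur--Rao aside does not help.

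The repair, which you gesture at in your last paragraph but do not carry out, is to drop the endpoint entirely and apply Cram\'er to \emph{blocks} of length $m\asymp c\log n$. Within each row the block sums are i.i.d., and $\P(\text{block sum}>am)\approx e^{-mI(a)}=n^{-cI(a)}$ is only \emph{polynomially} small; the expected number of exceeding blocks among the $\sim n/m$ blocks is $\asymp n^{1-cI(a)}/\log n$, diverging if $cI(a)<1$ and vanishing if $cI(a)>1$, so by within-row independence $\P(\text{some block sum}>am)\to1$ or $0$ accordingly, and likewise for the Brownian block increments with rate $a^{2}/2$. If $I(a)<a^{2}/2$, choose $c$ strictly between $2/a^{2}$ and $1/I(a)$; then with probability tending to $1$ some walk block exceeds $am$ while no Brownian block exceeds $(a-2\eps/c)m$, forcing $\max_k|S^{(n)}(k)-W^{(n)}(k)|>\eps\log n$ with probability tending to $1$, contradicting the hypothesis. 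The case $I(a)>a^{2}/2$ is symmetric, so $I(a)=a^{2}/2$ near $0$, hence $\psi(u)=u^{2}/2$ near $0$ by Legendre duality and $F=N(0,1)$. Crucially, every probability compared here tends to $1$ or $0$, so the merely-$o(1)$ control on the exceptional event suffices and the exponentially small scale never enters. This is the B\'artfai/Cs\"org\H{o}--R\'ev\'esz converse-embedding argument; the paper's own ``proof'' is in fact nothing more than a citation of Cs\"org\H{o}--R\'ev\'esz's Theorem~2.3.2 together with the remark that it transfers to triangular arrays, so your from-scratch attempt, once patched with this block localization, would supply an actual argument rather than a pointer.
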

\begin{proof}
This theorem follows easily from Cs{\"o}rg{\H{o}} and R{\'e}v{\'e}sz \cite{CRbook}*{Theorem 2.3.2} if the i.i.d. sequence $X_n$ there is replaced by a triangular array $X_{k,n}$. (We mention that the theorem of Cs{\"o}rg{\H{o}} and R{\'e}v{\'e}sz \cite{CRbook}*{Theorem 2.3.2} was strengthened by Bass and Burdzy \cite{BassBurdzy}*{Theorem 5.6}.) The underlying Theorems 2.3.1, 2.4.3-2.4.5 there can be modified accordingly as well.
\end{proof}

\subsection{Rate distortion theory}

We are grateful to the referee for the following interesting problem. \begin{que}\label{referee}
What is the complexity of the \emph{simplest} walk within $O(\log n/\sqrt{n})$ of the Brownian path? 
\end{que}

In the framework of classical rate distortion theory, this asks how low the complexity of a walk can be if we still want to be able to reconstruct a reasonable approximation to the Brownian path from it. Question \ref{referee} appears to be related to the question of \emph{how many} walks are close to the Brownian path, i.e., the size of balls 
\[
	B_{\eps}(W)=\{x\in\{\pm 1\}^{n}: d(\ell_{x},W)\le\eps\}
\]
centered at the Brownian path. Chen \cite{Chen}*{Theorem 2} obtained precise information about the size of balls centered at the constant function $0$. His work is not immediately applicable here, however, because by Corollary \ref{ofcourse} there will be choices of $\eps$ where $B_{\eps}(W)=\nil$, while $B_{\eps}(0)$ is fairly large.

\subsection{Schnorr randomness} While the most studied form of algorithmic randomness is Martin-L\"of randomness, there are other variants such as Schnorr randomness. Here tests are required to have the probability of $U_n$ equal to $2^{-n}$, or equivalently any computable function of $n$ that goes effectively to zero. Schnorr randomness is preferable in the sense that if a finite number of almost sure properties is shown to hold for each Schnorr random function, then there is in fact a computable function displaying this almost sure behavior. It would be somewhat disturbing to prove a law without being able to provide a computable example illustrating it. Fortunately, most laws encountered in practice hold for all Schnorr random functions.  

\begin{que}
Is there an analogous result to our main result, Theorem \ref{AsarinImprov}, for Schnorr random Brownian motion?
\end{que}

In one direction, the problem here is to compute the measure of analogues of the sets $U_n$. We suspect that this might be possible by analyzing the speed of convergence in Donsker's theorem.

\begin{bibdiv}
\begin{biblist}

\bib{AsarinThesis}{thesis}{
	author={Asarin, E. A.},
	title={Individual random signals: An approach based on complexity},
	type={doctoral dissertation},
	year={1988},
	institution={Moscow State University},
}
	
\bib{AP}{article}{
   author={Asarin, E. A.},
   author={Pokrovski{\u\i}, A. V.},
   title={Application of Kolmogorov complexity to the analysis of the
   dynamics of controllable systems},
   language={Russian, with English summary},
   journal={Avtomat. i Telemekh.},
   date={1986},
   number={1},
   pages={25--33},
   issn={0005-2310},
   review={\MR{831773 (87e:93096)}},
	note={Automat. Remote Control 47 (1986), no. 1, part 1, 21--28.},
}

\bib{BassBurdzy}{article}{
   author={Bass, Richard F.},
   author={Burdzy, Krzysztof},
   title={Stochastic bifurcation models},
   journal={Ann. Probab.},
   volume={27},
   date={1999},
   number={1},
   pages={50--108},
   issn={0091-1798},
   review={\MR{1681142 (2000b:60201)}},
}

\bib{Chaitin}{article}{
   author={Chaitin, Gregory J.},
   title={A theory of program size formally identical to information theory},
   journal={J.~Assoc. Comput. Mach.},
   volume={22},
   date={1975},
   pages={329--340},
   issn={0004-5411},
   review={\MR{0411829 (53 \#15557)}},
}

\bib{Chen}{article}{
   author={Chen, Xia},
   title={Moderate and small deviations for the ranges of one-dimensional
   random walks},
   journal={J. Theoret. Probab.},
   volume={19},
   date={2006},
   number={3},
   pages={721--739},
   issn={0894-9840},
   review={\MR{2280517 (2007k:60075)}},
}

\bib{CRbook}{book}{
   author={Cs{\"o}rg{\H{o}}, M.},
   author={R{\'e}v{\'e}sz, P.},
   title={Strong approximations in probability and statistics},
   series={Probability and Mathematical Statistics},
   publisher={Academic Press, Inc. [Harcourt Brace Jovanovich Publishers]},
   place={New York},
   date={1981},
   pages={284},
   isbn={0-12-198540-7},
   review={\MR{666546 (84d:60050)}},
}

\bib{Durrett}{book}{
   author={Durrett, Richard},
   title={Probability: Theory and examples},
   edition={2},
   publisher={Duxbury Press},
   place={Belmont, CA},
   date={1996},
   pages={xiii+503},
   isbn={0-534-24318-5},
   review={\MR{1609153 (98m:60001)}},
}

\bib{F}{article}{
   author={Fouch{\'e}, Willem},
   title={Arithmetical representations of Brownian motion. I},
   journal={J. Symbolic Logic},
   volume={65},
   date={2000},
   number={1},
   pages={421--442},
   issn={0022-4812},
   review={\MR{1782129 (2002b:68038)}},
}

\bib{F1}{article}{
   author={Fouch{\'e}, Willem L.},
   title={The descriptive complexity of Brownian motion},
   journal={Adv. Math.},
   volume={155},
   date={2000},
   number={2},
   pages={317--343},
   issn={0001-8708},
   review={\MR{1794714 (2002e:68044)}},
}

\bib{Gacs}{article}{
   author={G{\'a}cs, P{\'e}ter},
   title={Exact expressions for some randomness tests},
   journal={Z. Math. Logik Grundlag. Math.},
   volume={26},
   date={1980},
   number={5},
   pages={385--394},
   issn={0044-3050},
   review={\MR{589329 (82c:65004)}},
}

\bib{KN}{article}{
   author={Kjos-Hanssen, Bj{\o}rn},
   author={Nerode, Anil},
   title={Effective dimension of points visited by Brownian motion},
   journal={Theoret. Comput. Sci.},
   volume={410},
   date={2009},
   number={4-5},
   pages={347--354},
   issn={0304-3975},
   review={\MR{2493984 (2009k:68100)}},
}

\bib{KMT75}{article}{
   author={Koml{\'o}s, J.},
   author={Major, P.},
   author={Tusn{\'a}dy, G.},
   title={An approximation of partial sums of independent RV's and
   the sample DF. I},
   journal={Z. Wahrscheinlichkeitstheorie und Verw. Gebiete},
   volume={32},
   date={1975},
   pages={111--131},
   review={\MR{0375412 (51 \#11605b)}},
}

\bib{KMT76}{article}{
   author={Koml{\'o}s, J.},
   author={Major, P.},
   author={Tusn{\'a}dy, G.},
   title={An approximation of partial sums of independent RV's, and the
   sample DF. II},
   journal={Z. Wahrscheinlichkeitstheorie und Verw. Gebiete},
   volume={34},
   date={1976},
   number={1},
   pages={33--58},
   review={\MR{0402883 (53 \#6697)}},
}

\bib{Levy}{book}{
	author={L\'evy, P.},
	title={Th\'eorie de l'addition des variables al\'eatoires},
	publisher={Gauthier-Villars},
	place={Paris},
	edition={2},
	year={1954},
	pages={385}
}

\bib{LV}{book}{
   author={Li, Ming},
   author={Vit{\'a}nyi, Paul},
   title={An introduction to Kolmogorov complexity and its applications},
   series={Texts in Computer Science},
   edition={3},
   publisher={Springer},
   place={New York},
   date={2008},
   pages={xxiv+790},
   isbn={978-0-387-33998-6},
   review={\MR{2494387 (2010c:68058)}},
   doi={10.1007/978-0-387-49820-1},
}

\bib{NiesBook}{book}{
   author={Nies, Andr{\'e}},
   title={Computability and randomness},
   series={Oxford Logic Guides},
   volume={51},
   publisher={Oxford University Press},
   place={Oxford},
   date={2009},
   pages={xvi+433},
   isbn={978-0-19-923076-1},
   review={\MR{2548883}},
   doi={10.1093/acprof:oso/9780199230761.001.0001},
}

\bib{Schnorr}{book}{
   author={Schnorr, Claus-Peter},
   title={Zuf\"alligkeit und Wahrscheinlichkeit. Eine algorithmische
   Begr\"undung der Wahrscheinlichkeitstheorie},
   series={Lecture Notes in Mathematics, Vol. 218},
   publisher={Springer-Verlag},
   place={Berlin},
   date={1971},
   pages={iv+212},
   review={\MR{0414225 (54 \#2328)}},
}

\bib{Wiener}{article}{
	author={Wiener, Norbert},
	title={Differential space},
	journal={J. Math. Phys.},
	volume={2},
	year={1923},
	pages={131-174},
}

\end{biblist}
\end{bibdiv}
\end{document}